\documentclass[fontsize=11pt,reqno]{amsart}

\usepackage{amsmath}
\usepackage{amsthm}
\usepackage{amssymb}
\usepackage{mathtools}
\usepackage{mathrsfs}
\usepackage{thmtools}
\usepackage{thm-restate}
\usepackage{physics}
\usepackage[margin=1in]{geometry}
\usepackage{scrextend}
\usepackage[shortlabels]{enumitem}
\usepackage{colonequals}
\usepackage{comment}
\usepackage{setspace}
\usepackage{subfiles}
\usepackage{tikz-cd}
\usepackage{enumitem}

\usepackage[notcolon, notquote]{hanging}

\usepackage[title]{appendix}

\usepackage{hyperref}
\hypersetup{
    colorlinks=true,
    linkcolor=blue,
    citecolor=blue,
    }
\usepackage[noabbrev,capitalize]{cleveref}
\newtheorem{thm}{Theorem}[section]
\newtheorem{theorem}[thm]{Theorem}

\newtheorem{lemma}[thm]{Lemma}
\newtheorem{proposition}[thm]{Proposition}

\theoremstyle{definition}

\theoremstyle{remark}

\newtheorem{setup}[thm]{Set-up}

\newcommand{\C}{\mathbb{C}}
\newcommand{\tC}{\widetilde{C}}

\renewcommand{\P}{\mathbb{P}}
\renewcommand{\O}{\mathcal{O}}

\newcommand{\Sym}{\mathrm{Sym}}

\newcommand{\Jac}{\mathrm{Jac}}

\title{Low Degree Points on Singular Plane Curves}
\author[Canale, Chen, Curewitz, Daum, Dovgodko, Santiago-Calder\'{o}n, Yajnik]{Zachary R. Canale, Nathan Chen, Zoe Curewitz, Jacob A. Daum, \\Karina Dovgodko, Carlos F. Santiago-Calder\'{o}n, and Shiv R. Yajnik}

\begin{document}

\begin{abstract}
The purpose of this paper is to study low degree points on plane curves. We prove results analogous to those of Debarre and Klassen for singular plane curves with a finite number $\delta$ of ordinary nodes/cusps, where $\delta$ is bounded from above by a quadratic function in the degree of the plane curve.
\end{abstract}

\maketitle

%%%%%%%%
%%  INTRODUCTION
%%%%%%%%

\section{Introduction}

Over the past thirty years, there has been a flurry of development in the study of low-degree points on curves. In 1991, Faltings proved the Mordell--Lang conjecture, which characterizes when an algebraic curve can contain infinitely many points of bounded degree in terms of geometric properties. Shortly afterwards, there were a number of interesting applications to the existence of low-degree points on curves, beginning with the work of Abramovich \cite{Abramovich91}, Abramovich--Harris \cite{AH91}, Debarre--Fahlaoui \cite{DF93}, Debarre--Klassen \cite{DK94}, Harris--Silverman \cite{HS1991}, and many others.

Recall that the \textit{degree} of an algebraic point $p \in C(\overline{K})$ is the degree of the field extension of its residue field. In \cite{DK94}, Debarre--Klassen leveraged Faltings's theorem to study low-degree points on plane curves. When $C \subset \P^2$ is a \textit{nice} plane curve (i.e., smooth, projective, and geometrically integral) of degree $d \geq 7$ over a number field $K$, they show that $C$ contains only finitely many points of degree $\leq d-2$. Furthermore, they prove that if $d\geq 8$, then all but finitely many points on $C$ of degree $\leq d-1$ arise as the intersection of $C$ with a line in $\P^2$ defined over $K$ which passes through a $K$-point of $C$.

The goal of this paper is to extend the results of Debarre--Klassen to mildly singular plane curves. Our main result is the following.

\begin{restatable}{thm}{CCCDDSY}\label{thm:CCCDDSY} Let $C \subset \P^{2}$ be a plane curve over a number field $K$ with $\delta$ ordinary nodes and/or cusps. Suppose one of the following conditions holds:
\[ \{ d \geq 8 \quad \text{and} \quad \delta \leq d-8 \} \quad \text{or} \quad \left\{ d \geq 17 \quad \text{and} \quad \delta < \frac{d^{2}}{4} - 2d - \frac{17}{4} \right\}. \]
Then $C$ only contains finitely many points of degree $\leq d-3$, and all but finitely many points of $C$ of degree $\leq d-1$ arise as the intersection of $C$ with lines in $\P^2$ defined over $K$.
\end{restatable}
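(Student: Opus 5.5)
Write $\nu\colon \tC \to C$ for the normalization. Since $C$ has only $\delta$ ordinary nodes and cusps, $\tC$ is a nice curve of genus $g = \frac{(d-1)(d-2)}{2} - \delta$. The map $p \mapsto \nu^{-1}(p)$ is a bijection between all but finitely many closed points of $C$ and of $\tC$, and it preserves degree, so we may work on $\tC$ throughout. The plan follows Debarre--Klassen, in three steps.

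\emph{Step 1 (Faltings).} If $\tC$ has infinitely many points of degree $\le e$, then $\Sym^e \tC(K)$ is infinite. By Faltings's theorem applied to the image in $\Jac(\tC)$, together with the standard Abramovich--Harris/Frey argument, the fibres of $\Sym^e\tC \to \Jac(\tC)$ then produce a positive-dimensional linear system: there is a $g^1_{e'}$ on $\tC$ with $e' \le e$ containing infinitely many degree-$e'$ points, or a translate of a positive-dimensional abelian variety lies in $W_e(\tC)$. By Debarre--Fahlaoui, the latter forces $e$ to be at least roughly $g/2$ or so, up to lower-order terms in $g$. In our range, $g \approx d^2/2 - \delta$ is much larger than $d$, which excludes this case. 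The conditions on $\delta$ are what make this comparison work, and the quadratic bound $\delta < d^2/4 - 2d - 17/4$ is presumably exactly this inequality. So finiteness of points of degree $\le d-3$ reduces to showing that $\tC$ has no $g^1_e$ with $e \le d-3$. For degree $\le d-1$, we need to classify the $g^1_{d-2}$ and $g^1_{d-1}$.

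\emph{Step 2 (gonality of nodal curves).} This step is the main obstacle. We must show that every base-point-free pencil on $\tC$ of degree $\le d-1$ is cut out by lines through a point of $C$. The degree is $d-1$ for lines through a smooth point, and $d-2$ for lines through a node or cusp. In particular, no pencil has degree $\le d-3$. For $\delta \le d-8$, I would use Coppens--Kato-type results on the gonality of nodal plane curves, which say that gonality is $d-2$ and that the minimal pencils are projections from nodes. For the quadratic range, I would use a Cayley--Bacharach argument. A divisor $D$ of degree $e \le d-1$ moving in a pencil fails to impose independent conditions on adjoint curves of degree $d-3$, which pass through the singular points. Via the standard lemma (for instance Lopez--Pirola or Ciliberto's, in its version with base points), a reduced zero-dimensional scheme $D \cup \Sigma$ that fails to impose independent conditions on curves of degree $d-3$ must have many points on a line, or, when $|D| + \delta$ is large, on a conic. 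The quadratic bound on $\delta$ is used to exclude the conic case. Hence $D$ lies on a line $L$, and the residual points of $L \cap C$ are a point of $C$: a smooth point if $e = d-1$, a singular point if $e = d-2$.

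\emph{Step 3 (descent).} The pencil of lines through a point $P$ is defined over $K$ only if $P$ is. For a pencil containing infinitely many $K$-rational divisors, $P$ is determined by the pencil, so it is Galois-invariant and therefore $K$-rational. Hence all but finitely many points of degree $\le d-1$ lie in finitely many $K$-pencils of lines through a $K$-point of $C$. This gives the second assertion. The first assertion follows because Step 2 gives no pencil of degree $\le d-3$, while Step 1 excludes the abelian-variety case.
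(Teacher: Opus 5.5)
\textbf{There is a genuine gap in Step 1, where you dispose of the abelian-variety case.}

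Debarre--Fahlaoui does not say that a positive-dimensional abelian variety can lie in $W_e(\tC)$ only when $e$ is of order $g/2$. What it bounds is the \emph{dimension} of such an abelian variety (roughly $\dim A\le e/2$). No criterion stated purely in terms of $e$ and $g$ can work:
\begin{itemize}
\item a bielliptic curve of any genus has an elliptic curve inside $W_2$;
\item more generally, any curve with a degree-$k$ map to an elliptic curve $E$ has a translate of $E$ inside $W_k$, namely $p\mapsto[f^*p]$.
\end{itemize}
So the hypotheses on $\delta$ cannot enter through a genus comparison. As written, nothing in your argument rules out an elliptic curve in $W_e(\tC)$ for $e\le d-1$. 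Both assertions need exactly that: it gives finiteness of $K$-points of $W_e(\tC)$ for $e\le d-3$, and it confines all but finitely many points to finitely many fibres when $e=d-2,d-1$. The naive Frey-type argument (an abelian variety in $W_e$ yields a $g^1_{2e}$), combined with gonality $d-2$, only excludes $e<(d-2)/2$.

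\textbf{The missing idea.} This is the core of the paper, Proposition~\ref{prop:CCCDDSY.AbelianVarieties}. Pass to $A_2=A+A\subset W_{2e}(\tC)$.
\begin{itemize}
\item By Abramovich's Lemma 8, every class $[M]\in A_2$ has $h^0(M)\ge h+1$. By induction on $e$ one may assume $|M|$ is base-point-free.
\item This gives an $h$-dimensional family of base-point-free linear series of degree $2e\le 2d-2$.
\item The hypotheses on $\delta$ are exactly what make Coppens--Kato's Main Lemma (with $n\le 2d-2$, $k=\lfloor d/2-2\rfloor$) and their Lemma 1.2 force every such pencil to be cut out by lines or by conics (Propositions~\ref{prop:quadBoundConics} and~\ref{prop:linearBoundConics}). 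That is where the $-2d$ in $d^2/4-2d-17/4$ comes from; it is not a genus inequality.
\end{itemize}

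One then eliminates each configuration of base points:
\begin{itemize}
\item Lines through a moving smooth point would make $\tC$ elliptic.
\item Conics with two moving base points give a generically injective map $A_2\dashrightarrow\Sym^2\tC$. For $h\ge 2$ this contradicts that $\Sym^2\tC$ is of general type. For $h=1$, Harris--Silverman makes $\tC$ hyperelliptic or bielliptic, hence it has a $g^1_4$, contradicting gonality $d-2$.
\item Four moving base points reduce to an elliptic family of pencils of degree $\le 8\le 2d-9$, which the lines case has already excluded.
\end{itemize}

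Once this is in place, Steps 2 and 3 are fine. For the quadratic range in Step 2 you can simply quote Coppens--Kato's Theorems 2.1, 2.3 and 2.4, as the paper does in Proposition~\ref{prop:corollaryOfCK}, rather than build a new Cayley--Bacharach argument.
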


\noindent The first part of the theorem can be rephrased as saying that the minimum $\min \delta(C/K)$ of the density degree set is $\geq d-2$ (see \S2). In addition, $d-2 \in \delta(C/K)$ if and only if one of the nodes/cusps is a $K$-point, and $d-1 \in \delta(C/K)$ if and only if $C$ contains a smooth $K$-point.

We refer the reader to the recent work of Kadets--Vogt, where they study low degree points on arbitrary curves under a certain genus assumption \cite{KV25}. This genus assumption is not satisfied for the curves that we consider in Theorem~\ref{thm:CCCDDSY} (one can use the degree-genus formula for a plane curve to see this). This connection is further clarified in \S4, where we present an example due to Coppens--Kato that demonstrates that the quadratic bound on $\delta$ in Theorem~\ref{thm:CCCDDSY} is asymptotically optimal. After a draft of this paper was written, we were informed of some related work of Granot \cite{Granot2025}, where they study low degree points on curves in toric surfaces.

\subsection*{Acknowledgments}
This project originally began as an REU at Columbia University over the summer of 2023. We would like to thank Avi Zeff for his guidance and mentorship. We would also like to thank George Dragomir for organizing the REU program and making this research opportunity possible. During the preparation of this article, NC was partially supported by an NSF postdoctoral fellowship DMS-2103099.

%%%%%%%%
%%  BACKGROUND AND REDUCTION STEP
%%%%%%%%

\section{Background}\label{Section:Background}

Recall that the \textit{density degree set} $\delta(C/K)$ is the set of positive integers $e$ for which the set of degree $e$ points on $C$ is Zariski dense (cf. \cite[\S 2]{VV24}) -- since $C$ is one-dimensional, note that this is equivalent to the existence of infinitely many degree $e$ points. The degree $e$ \textit{Jacobian variety} $\Jac^{e}(C)$ denotes the connected component of the Picard group $\mathrm{Pic}(C)$ which consists of degree $e$ line bundles. For $e \geq 1$, $\Sym^{e}(C)$ will denote the $e$-th symmetric product of $C$. The degree $e$ \textit{Abel--Jacobi map}
\[ \mathrm{AJ}_{e}: \Sym^e(C) \to \Jac^e(C) \]
sends an effective degree $e$ zero-cycle $D$ to its divisor class $[D]$. We denote by $W_e(C) \subset \Jac^e(C)$ the image of $\Sym^e(C)$ under $\mathrm{AJ}_{e}$.

Given a complete linear system $|D|$ of degree $m$ on $C$, we denote by $g^r_m$ a projective linear subspace of $|D|$ of dimension $r$.  A $g^1_m$ is called \textit{a pencil}. For a singular plane curve $C$ with ordinary nodes and/or cusps and normalization $\tC$, we will say that a linear series on $\tC$ is \textit{induced by} curves on $C$ if it is the restriction of a linear system of plane curves (of some fixed degree) -- see the second paragraph of \cite{CK91}.

\begin{setup}\label{Setup}
    Fix a number field $K$ and let $C \subset \P^{2}$ be an integral plane curve over $K$ of degree $d$ with $\delta$ ordinary nodes and/or cusps (and no other singularities). Let $\nu \colon \tC \rightarrow C$ denote the normalization.
\end{setup}

The starting point for the work of Debarre--Klassen \cite{DK94} is the following remarkable result of Faltings, which proved the Mordell-Lang conjecture:

\begin{theorem}[\cite{Faltings91}]\label{thm:Faltings} Let $A$ be an abelian variety defined over a number field $K$. If $X$ is a subvariety of $A$ that does not contain any translate of a positive-dimensional abelian subvariety of $A$, then $X$ contains only finitely many $K$-points.\end{theorem}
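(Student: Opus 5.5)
The statement above is Faltings's theorem, which this paper only cites and applies; nothing proved earlier in the paper helps establish it. What follows is the plan I would follow to reconstruct it along the lines of Vojta's Diophantine-approximation method, as generalized by Faltings. The first step is a reduction to a statement about Zariski density. Let $Z$ be the Zariski closure of $X(K)$ in $X$. Each irreducible component $Z'$ of $Z$ is again a subvariety of $A$ that contains no translate of a positive-dimensional abelian subvariety, since any such translate would lie in $X$. So it suffices to prove the following: \emph{if $X$ is irreducible and contains no such translate, then $X(K)$ is not Zariski dense in $X$.} Noetherian induction on $\dim X$ then shows that $Z$ is a finite set of points. In this reduction I would also use the Ueno--Kawamata structure theory: an irreducible $X$ with trivial stabilizer and no translates of abelian subvarieties has ample canonical-type positivity, in the sense that suitable line bundles built from it on $X^m$ are big.

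Next I would bring in the arithmetic input, namely the Mordell--Weil theorem. It says $A(K)$ is finitely generated, so $A(K)\otimes\mathbb{R}$ is a Euclidean space under the Néron--Tate height pairing $\langle\cdot,\cdot\rangle$. Suppose $X(K)$ were Zariski dense. A pigeonhole argument on the unit sphere of $A(K)\otimes\mathbb{R}$ then gives points $x_1,\dots,x_m\in X(K)$ in general position. These points point in nearly the same direction, $\langle x_i,x_j\rangle\ge(1-\epsilon)|x_i||x_j|$, and their heights grow very rapidly, $|x_{i+1}|\ge c\,|x_i|$ with $c$ huge. On $X^m$ I would then consider the line bundle
\[
L=\sum_i s_i^2\,p_i^*N+\sum_{i<j} s_is_j\,(p_i-p_j)^*M,
\]
where $N,M$ are symmetric ample bundles on $A$ and the weights $s_i$ are roughly $|x_i|^{-1}$. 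The key geometric claim is that $L$ is big, with enough sections relative to its negative part. This is exactly where the hypothesis that $X$ contains no translates is used; the proof is a difference-map and Kawamata-type argument on $X^m$.

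The remaining steps produce the contradiction. Using arithmetic Riemann--Roch (Gillet--Soulé) and Siegel's lemma, I would construct a global section of $L$ that has small height and vanishes to high weighted index at $(x_1,\dots,x_m)$. Combining the height of this section with the near-parallel configuration of the $x_i$ gives a lower bound on its index at that point, a Vojta-type inequality. In the other direction, Faltings's product theorem shows that high index can only occur along a product subvariety $Z_1\times\cdots\times Z_m\subsetneq X^m$, with each $Z_i$ a proper subvariety through $x_i$ and of bounded degree. Choosing the $x_i$ generically, so that they avoid the finitely many such bounded-degree $Z_i$ coming from the induction, yields a contradiction.

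I expect the main obstacle to be the interplay of the last two ingredients. One needs the bigness of $L$ in the presence of the negative cross terms, and one needs the product theorem to control where the index can be large. Together these replace Roth's lemma and Dyson's lemma from the curve case, and this is the genuinely hard part of Faltings's work. I would first try to follow Faltings's "Diophantine approximation on abelian varieties" closely for these two steps rather than attempt to simplify them.
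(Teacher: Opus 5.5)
The paper does not prove this statement. It is quoted from Faltings and invoked once, in the proof of Theorem~\ref{thm:CCCDDSY}, to get finiteness of $W_e(\tC)(K)$, so citing it is what is expected. Your outline has the right architecture for Faltings's proof, but two steps fail as written.

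\emph{The line bundle.} With $N,M$ ample and $s_is_j>0$, your $L$ is ample on $A^m$. Its bigness on $X^m$ is therefore automatic, and cannot be where the absence of translates enters. Worse, its N\'eron--Tate height at $x=(x_1,\dots,x_m)$ is $\sum_i s_i^2|x_i|_N^2+\sum_{i<j}s_is_j|x_i-x_j|_M^2\ge 0$. With $s_i\approx|x_i|^{-1}$ the cross terms are of size $|x_j|/|x_i|$, i.e.\ huge. A small section of $L^{\otimes k}$ forces high index at $x$ only if $h_L(x)$ is \emph{negative} on the relevant scale. Otherwise the Liouville-type inequality coming from a nonvanishing low-index derivative gives no constraint. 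The Vojta divisor should instead have roughly the shape
\[ L=\sum_{i=1}^{m-1}(s_ip_i-s_{i+1}p_{i+1})^*N-\epsilon\sum_{i=1}^{m}s_i^2\,p_i^*N,\qquad s_i\in\mathbb{Z},\ s_i\approx S/|x_i|. \]
Its height at $x$ is about $S^2\bigl(\sum_i|u_i-u_{i+1}|^2-\epsilon m\bigr)<0$ for nearly parallel unit vectors $u_i=x_i/|x_i|$. This $L$ is not nef, and its bigness on $X^m$ is exactly where the hypothesis is used. The map $X^m\to A^{m-1}$, $(y_i)\mapsto(s_iy_i-s_{i+1}y_{i+1})$, has injective differential at a general point, because $\bigcap_iT_{y_i}X=0$ once $X$ has finite stabilizer and $m$ is large. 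Hence the first sum is big on $X^m$, although not on $A^m$. The real work is showing that subtracting $\epsilon\sum s_i^2p_i^*N$ preserves bigness uniformly in $s$.

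\emph{The closing step.} The product theorem puts $x$ in $Z_1\times\dots\times Z_m$ with \emph{some} (not every) $Z_i\neq X$, of bounded degree. The height of $Z_i$ is bounded only relative to the weights $d_i$, roughly $h(Z_i)\lesssim(h(\sigma)+\sum_jd_j)/d_i$, as in Roth's lemma. This is a small multiple of $|x_i|^2$, not an absolute constant. Bounded degree alone does not give a finite list of candidates. Even with Northcott, the list grows with $|x_i|$, so you cannot choose the $x_i$ generically to avoid the $Z_i$. The argument has to be closed differently. One route is to rerun the construction on the product $Z_1\times\dots\times Z_m$. Its positive-dimensional factors still have finite stabilizer, since they lie in $X$, and all constants must be kept uniform in the degrees and heights of the $Z_i$. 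One then descends on the dimension of the product until some factor is a point $\{x_i\}$, which contradicts $h(Z_i)\ll|x_i|^2$. Separately, your reduced statement needs $\dim X>0$, since a single $K$-point trivially has dense $K$-points.
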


\noindent Following ideas of Abramovich-Harris \cite{AH91} and Debarre--Klassen \cite{DK94}, in order to understand low degree points on a plane curve $C \subset \P^{2}$, one can reduce to studying rational points on certain $W_{e}(\tC)$ varieties which naturally live inside the Jacobian of the normalization $\tC \rightarrow C$. More precisely, degree $e$ points can be viewed as rational points on the $e$-th symmetric power of $C$. Using the geometry of the Abel--Jacobi map, our main result reduces to a statement about the presence of translates of positive-dimensional abelian subvarieties in $W_e(\tC)$, the image of the symmetric product $\Sym^e(\tC)$ under the Abel--Jacobi map. The main difficulty is that we must work with the normalization $\tC$ of $C$; although $\tC$ is smooth, its linear series are different from that of a smooth plane curve.

Suppose that $\tC$ has infinitely many distinct degree $e$ points. Then at least one of two possibilities must occur: (a) infinitely many distinct $K$-points lie inside of a fiber of the Abel--Jacobi map, which is a linear series, or (b) the image $W_{e}(\tC)$ contains infinitely many distinct $K$-points. Our strategy, following the approach in \cite{DK94}, is to show that in a certain range $1 \leq e \leq d-1$, the variety $W_{e}(\tC)$ does not contain any positive dimensional abelian subvarieties. Then (a) must hold; in other words, the $K$-points come from some linear series.

In order to describe these linear series on $\tC$ more precisely, we will first derive a number of corollaries of the main results of Coppens--Kato \cite{CK91, CK91correction}, which we restate here.

\begin{theorem}[\cite{CK91}, Theorems 2.1, 2.3, and 2.4(ii)]\label{thm:CKcompilation}
    Fix a positive integer $k$. Let $C$, $\tC$ be as in Set-up~\ref{Setup}.
    \begin{enumerate}[topsep=-1ex, itemsep=-0.5ex, partopsep=1ex, parsep=1ex]
        \item[(i)] If $d \geq 2(k+1)$ and $\delta < kd - (k+1)^2 + 3$, then $\tC$ has no linear system $g_{e}^1$ with $e \leq d-3$.
        \item[(ii)] If $d \geq 2k+3$ and $\delta < kd - (k+1)^{2}+2$, then each linear system $g_{d-2}^{1}$ on $\tC$ is cut out by a pencil of lines.
        \item[(iii)] If $k \geq 3$, $d \geq 2k+3$, and $\delta < kd - (k+1)^{2} + 1$, then each $g^{1}_{d-1}$ on $C$ is cut out by a pencil of lines.
    \end{enumerate}
\end{theorem}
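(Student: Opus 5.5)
Since this statement collects results of \cite{CK91} (with the correction \cite{CK91correction}), the proof I have in mind is mainly a matter of checking that the hypotheses there match Set-up~\ref{Setup}. Coppens--Kato work with an irreducible plane curve of degree $d$ over an algebraically closed field whose only singularities are $\delta$ ordinary nodes and cusps. So I would first base change to $\overline{K}$. Having a $g^1_e$, or being cut out by a pencil of lines, is a geometric property, and the singularity hypotheses are stable under base change, so nothing is lost. Then (i), (ii), (iii) are exactly \cite[Theorems 2.1, 2.3, 2.4(ii)]{CK91} with the inequalities as corrected in \cite{CK91correction}. I would still outline the underlying argument, so that the role of the quadratic bound $kd-(k+1)^2$ is visible.

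\emph{Step 1 (translation to adjoint curves).} Let $g^1_e$ be a pencil on $\tC$; after removing base points we may assume it is base-point-free, with $e$ no larger. Take a general divisor $D$ in it, which consists of $e$ distinct smooth points of $C$. The canonical series of $\tC$ is cut out by adjoint curves of degree $d-3$ through the singular set $\Delta$. By Riemann--Roch, $h^0(D)\ge 2$ means that $D$ imposes at most $e-1$ conditions on these adjoints. Hence the scheme $\Delta\cup D$, of length $\delta+e$, fails to impose independent conditions on plane curves of degree $d-3$.

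\emph{Step 2 (the independence lemma).} Here I would apply a Castelnuovo/Ellia--Peskine type statement. If a zero-dimensional scheme $Z$ of length at most roughly $k(n+3)-(k+1)^2$ fails to impose independent conditions on curves of degree $n$, then some subscheme of $Z$ of length at least $n+2$ lies on a line (or, more generally, $Z$ has large intersection with a curve of degree $\le k$). We apply this with $n=d-3$ and $Z=\Delta\cup D$. The hypotheses $\delta<kd-(k+1)^2+c$ and $e\le d-1$ are precisely what make $\delta+e$ fall in this range, and $d\ge 2k+2$ or $d\ge 2k+3$ is the usual numerical condition for the lemma. I expect this step to be the main obstacle. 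One has to do an induction on $k$, peeling off curves of degree $j\le k$ that contain many points, and control how much of $\Delta$ (whose points count doubly against a line by Bezout) can lie on such curves.

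\emph{Step 3 (Bezout).} Suppose the conclusion is that a line $L$ contains at least $d-1$ points of $Z$, with a point of $\Delta$ counting with multiplicity $2$ in $L\cdot C$. Since $L\cdot C=d$, varying $D$ in the pencil shows that the pencil is cut out by lines through a fixed point $p$. If $p$ is a smooth point of $C$ the residual series has degree $d-1$. If $p$ is a node or cusp it has degree $d-2$. If $p\notin C$ it has degree $d$. This gives the three statements. In case (i), $e\le d-3$ is impossible. In case (ii), $p$ must be a singular point. In case (iii), $p$ is a point of $C$; the stronger hypothesis $k\ge3$ is used to exclude the remaining case where the pencil is instead cut by conics through a fixed set of points.
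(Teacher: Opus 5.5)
Your actual proof, which passes to $\overline{K}$ and quotes \cite[Theorems 2.1, 2.3, 2.4(ii)]{CK91}, is exactly what the paper does; the paper offers nothing beyond the citation. As a proof, the proposal is fine. Your outline of the ``underlying argument'', however, would not go through as written, for two reasons.

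First, the range in Step 2 is off by nearly $d$. The quantity $k(n+3)-(k+1)^2=kd-(k+1)^2$ is, up to the constant $c$, the hypothesis on $\delta$ alone. But the lemma is applied to $\ell(Z)=\delta+e$, which can exceed that quantity by up to $d-1$. The correct bookkeeping is
\[
\bigl(kd-(k+1)^2+c\bigr)+(d-c)=(k+1)\bigl(d-(k+1)\bigr),\qquad c=3,2,1,
\]
for $e\le d-3$, $e=d-2$, $e=d-1$ respectively. So the hypotheses give $\delta+e<(k+1)(d-k-1)$. This is \cref{lemma:CKMain} with $k+1$ in place of $k$, or equivalently the Ellia--Peskine lemma you allude to, taken with $s=k+1$. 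That lemma says: if $\ell(Z)<s(n+3-s)$, $s\le (n+3)/2$, and $Z$ fails to impose independent conditions in degree $n$, then some curve of degree $t\le s-1$ contains at least $t(n+3-t)$ points of $Z$. For $n=d-3$ the side condition $s\le (n+3)/2$ is exactly $d\ge 2k+2$.

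Second, Step 3 treats only $t=1$, and lines are forced only when $\ell(Z)\le 2d-5$. That is the case $k=1$, which gives only a linear bound on $\delta$. The quadratic range needs every $t\le k$, and this is a short Bezout count. Suppose the degree-$t$ curve contains $a$ singular points and $b$ points of $D$. Then $2a+b\le td$ and $a+b\ge t(d-t)$ give $b\ge t(d-2t)$, which is the numerics of \cref{lemma:CK1.2}. Moreover
\[
t(d-2t)-(d-c)=(t-1)(d-2t-2)+(c-2).
\]
This is positive for $2\le t\le k$ in (i) and (ii), using $d\ge 2k+2$ and $d\ge 2k+3$ respectively. In (iii) it is positive for $t\ge 3$, while for $t=2$ it equals $d-7$. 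That is exactly where $k\ge 3$ (hence $d\ge 9$) is needed, so your remark about conics is right. For $k=2$, $d=7$ and four general nodes, the conics through the nodes cut out a base-point-free $g^1_6$ that is not cut out by lines. Hence $b>e$ unless $t=1$.

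For $t=1$ in (iii) you must still exclude a line through a singular point containing only $d-2$ points of $D$. In that case the leftover point is forced to be constant, by linear equivalence on the non-rational curve $\tC$, and so is a base point of the pencil. With these repairs your outline becomes a valid self-contained derivation of (i)--(iii).
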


We can produce explicit bounds on the number of nodes by solving for the optimal value of $k$.

\begin{proposition}\label{prop:corollaryOfCK}
    Let $C$ and $\tC$ be as in Set-up~\ref{Setup}.
    \begin{enumerate}[topsep=-1ex, itemsep=-0.5ex, partopsep=1ex, parsep=1ex]
        \item[(i)]\label{prop:corollaryOfCK-1} If $d \geq 4$ and $\delta < \frac{1}{4} d^{2} - d + \frac{11}{4}$, then $\tC$ does not admit a $g^{1}_{e}$ with $e \leq d-3$.
        \item[(ii)]\label{prop:corollaryOfCK-2} If $d \geq 5$ and $\delta \leq \frac{d^{2}}{4} - d$, then every $g^{1}_{d-2}$ on $\tC$ is induced by a pencil of lines.
        \item[(iii)]\label{prop:corollaryOfCK-3} If $d \geq 9$ and $\delta \leq \frac{d^{2}}{4} - d - 1$, then every $g^{1}_{d-1}$ on $\tC$ is induced by a pencil of lines.
    \end{enumerate}
\end{proposition}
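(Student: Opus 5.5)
The plan is to specialize each part of Theorem~\ref{thm:CKcompilation} to a single well-chosen integer $k$, depending on the parity of $d$. In each part the bound on $\delta$ there is $f(k) = kd - (k+1)^2 + c$, with $c = 3, 2, 1$ respectively. As a function of a real variable, $f$ is maximized at $k+1 = d/2$. The degree constraint ($d \geq 2k+2$ in (i), $d \geq 2k+3$ in (ii), (iii)) forces $k \leq (d-2)/2$, respectively $k \leq (d-3)/2$. So I take the largest integer $k$ allowed by the degree constraint, compute $f(k)$ in the two parity cases, and check that the hypothesis on $\delta$ in the proposition implies $\delta < f(k)$. I also have to check the lower bound on $k$ ($k \geq 1$ in (i), (ii) and $k\geq 3$ in (iii)), which is what produces the stated lower bounds on $d$.

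For (i): if $d$ is even, take $k = d/2 - 1$, so that $d = 2(k+1)$ and
\[ f(k) = \tfrac{d^2}{4} - d + 3 > \tfrac{d^2}{4} - d + \tfrac{11}{4}. \]
If $d$ is odd, take $k = (d-3)/2$, so that $k+1 = (d-1)/2$ and
\[ f(k) = \tfrac{(d-3)d}{2} - \tfrac{(d-1)^2}{4} + 3 = \tfrac{d^2}{4} - d + \tfrac{11}{4}, \]
which is exactly the stated bound. In both cases $k \geq 1$ precisely when $d \geq 4$. Theorem~\ref{thm:CKcompilation}(i) then rules out every $g^1_e$ with $e \leq d-3$.

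For (ii), use the constraint $d \geq 2k+3$. If $d$ is odd, take $k = (d-3)/2$, which gives
\[ f(k) = \tfrac{d^2}{4} - d + \tfrac{7}{4}. \]
If $d$ is even, take $k = (d-4)/2$, which gives
\[ f(k) = \tfrac{(d-4)d}{2} - \tfrac{(d-2)^2}{4} + 2 = \tfrac{d^2}{4} - d + 1. \]
Both values are strictly larger than $\tfrac{d^2}{4} - d$, so the non-strict hypothesis suffices. The condition $k \geq 1$ requires $d \geq 5$ for odd $d$ and $d \geq 6$ for even $d$, which together amount to $d \geq 5$.

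For (iii), the same two choices of $k$ with $c = 1$ give $f(k) = \tfrac{d^2}{4} - d + \tfrac{3}{4}$ for odd $d$ and $f(k) = \tfrac{d^2}{4} - d$ for even $d$. Both strictly exceed $\tfrac{d^2}{4} - d - 1$. The requirement $k \geq 3$ means $d \geq 9$ for odd $d$ and $d \geq 10$ for even $d$, so it holds exactly when $d \geq 9$.

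The only real obstacle is bookkeeping: choosing the integer $k$ correctly in each parity case and keeping track of strict versus non-strict inequalities. The stated constants ($\tfrac{11}{4}$, $0$, $-1$) are designed so that the worse parity case still gives a strict inequality. One further point: Theorem~\ref{thm:CKcompilation}(iii) is phrased for $g^1_{d-1}$ ``on $C$''. I will read it, as in \cite{CK91}, as concerning linear series on $\tC$, so that the conclusion of (iii) is about $\tC$.
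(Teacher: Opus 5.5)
Your proposal is correct and is essentially the paper's proof. The paper uses the same $k$: $\lfloor d/2\rfloor - 1$ in (i) and $\lfloor (d-1)/2\rfloor - 1$ in (ii) and (iii). The only cosmetic difference is that the paper does not split by parity; it bounds $f(k)$ from below using that $f$ is increasing for $\ell \leq d/2 - 1$, evaluating at the real point $(d-3)/2$ in (i) and $d/2 - 2$ in (ii) and (iii).
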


\begin{proof}
    The proofs for each part are quite similar, and will rely on results of Coppens--Kato.

    (i) Consider the function $f(\ell) \coloneqq \ell d - (\ell+1)^{2}+3$ and note that it is increasing on the interval $1 \leq \ell \leq \frac{d}{2}-1$. We will set $k \coloneqq \lfloor \frac{d}{2} \rfloor - 1$ and apply Theorem~\ref{thm:CKcompilation}(i). In order to do so, we need to check that $\delta < f(k)$. One can verify that this is true since
    \[ \delta < \frac{1}{4} d^{2} - d + \frac{11}{4} = f\left( \frac{d-3}{2} \right) \leq f(k). \]
    For the last inequality, it is helpful to note that $\frac{d-3}{2} \leq k$.
    
    (ii) The assumption $d \geq 5$ ensures that the right hand side of the bound for $\delta$ is non-negative. Now let $f(\ell) \coloneqq \ell d - (\ell+1)^{2}+2$ and fix $k = \lfloor \frac{d-1}{2} \rfloor - 1$. In order to apply ~\cref{thm:CKcompilation}(ii), we need to check that $\delta < f(k)$. By assumption, we have
    \[ \delta \leq \frac{d^{2}}{4} - d < f\left( \frac{d}{2} - 2 \right) \leq f(k), \]
    where the last inequality follows from the fact that $f(\ell)$ is increasing on the interval $1 \leq \ell \leq \frac{d}{2}-1$. 

    (iii) Set $f(\ell) \coloneqq \ell d - (\ell+1)^{2}+1$ and fix $k = \lfloor \frac{d-1}{2} \rfloor - 1 \geq 3$ (here is where we need $d \geq 9$). In order to apply ~\cref{thm:CKcompilation}(iii), we need to check that $\delta < f(k)$. Following the same strategy as in part (ii), one has
    \[ \delta \leq \frac{d^{2}}{4} - d - 1 < f\left( \frac{d}{2} - 2 \right) \leq f(k). \qedhere \]
\end{proof}

We will also need to recall two more lemmas of Coppens--Kato. Let $\P_{m}$ denote the projective space parametrizing plane curves of degree $m$.

\begin{lemma}[\cite{CK91correction}, Main Lemma]\label{lemma:CKMain}
Fix a positive integer $k$. Following Set-up~\ref{Setup}, if $n+\delta < k (d - k)$ for some integer $k>0$, then for any base-point-free $g^1_n$ on $\tC$, there exists a pencil $\P \subset \P_{k-1}$ inducing $g^1_n$ on $C$.
\end{lemma}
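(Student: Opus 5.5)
This is a cited result, the Main Lemma of \cite{CK91correction}, and the paper takes it as given. If I had to reprove it, I would follow the standard Castelnuovo-type strategy of Coppens--Kato. Everything is done on the plane curve $C$, where the conductor (adjoint) conditions at the $\delta$ nodes/cusps replace the canonical series of $\tC$. The canonical series of $\tC$ is cut out by adjoint curves of degree $d-3$, meaning curves through the singular points. By Riemann--Roch, a base-point-free $g^1_n$ given by a divisor $D$ satisfies $h^0(K_{\tC}-D)=g-n+1$, where $g=\binom{d-1}{2}-\delta$. So the adjoint curves of degree $d-3$ passing through $\nu(D)$ fail to impose independent conditions by exactly one. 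Here $\nu(D)$ is a set of $n$ points on the smooth locus, after moving $D$ in its pencil to avoid the singular points; this is possible because the series is base-point-free.

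The first step is to translate this failure into a statement about the zero-dimensional scheme $Z=\nu(D)\cup\Sigma$. Here $\Sigma$ is the singular locus, or the conductor scheme in the cusp case, and has length about $\delta$. The conclusion is that $Z$ fails to impose independent conditions on plane curves of degree $d-3$. In addition, the residual configuration $Z'=\nu(D')\cup\Sigma$ obtained from any other member $D'$ of the pencil has the same property.

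The second step is to bound the length of $Z$: we have $\deg Z\le n+\delta<k(d-k)$. Then I would invoke the standard result on the failure of a scheme to impose independent conditions on curves of degree $m=d-3$, due to Ellia--Peskine and Davis, as used by Coppens--Kato. If a scheme of length $<k(m+3-k)$ fails to impose independent conditions on curves of degree $m$, then for some $j\le k$ a subscheme of length $\ge j(m+3-j)$ lies on a curve of degree $j$. Applying this with $m=d-3$ and our bound, I would aim to show that $D$ lies, modulo the singular points, on a curve $\Gamma$ of degree $j\le k-1$. The inequality is arranged so that the case $j=k$ is excluded.

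The third step is the pencil construction. We need the curve $\Gamma$ through $\nu(D)\cup\Sigma$ and a corresponding curve $\Gamma'$ through $\nu(D')\cup\Sigma$, both of degree at most $k-1$, which we may pad to exactly degree $k-1$. They span a pencil $\P\subset\P_{k-1}$. Restricting this pencil to $C$ and removing the fixed part, which includes the contributions at $\Sigma$, gives a moving part containing $D$ and $D'$. Since two members of a pencil determine the pencil, that moving part is the $g^1_n$.

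The main obstacle is the second step. We must guarantee that all of $\nu(D)$, and not merely a large subscheme, lies on a curve of degree at most $k-1$. We must also show that this curve does not contain $C$, which holds since $k-1<d$. The numerical hypothesis $n+\delta<k(d-k)$ is exactly what the Davis/Ellia--Peskine bound requires. The delicate point is the induction on the residual scheme with respect to $\Gamma$, and I would follow Coppens--Kato's corrected argument there.
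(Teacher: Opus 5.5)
Citing \cite{CK91correction}, as the paper does, is all that is required here. However, the reproof you sketch would not go through as written, and the parts it defers are the actual content of the lemma.

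First, the input in your first step is too weak. Knowing only that $Z=\nu(D)\cup\Sigma$ fails to impose independent conditions on curves of degree $d-3$ says nothing about which points cause the failure. So once Ellia--Peskine/Davis gives you a curve $\Gamma$ of degree $j\le k-1$ containing a subscheme of length $\ge j(d-j)$, nothing forces the remaining points of $\nu(D)$ onto $\Gamma$. What base-point-freeness really gives is the Cayley--Bacharach property: $h^0(K_{\tC}-D+p)=h^0(K_{\tC}-D)$ for every $p\in D$, so every adjoint of degree $d-3$ through $\nu(D)\setminus\{p\}$ also contains $p$. With this property, a point $p\in\nu(D)\setminus\Gamma$ is excluded by the adjoint $\Gamma+G$. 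Here $G$ has degree $d-3-j$ and passes through the residual scheme of $Z$ with respect to $\Gamma$ except $p$, but not through $p$. Finding such a $G$ requires that residual, of length $<(k-j)(d-k-j)$, to impose independent conditions in degree $d-3-j$. This is automatic for $j=k-1$ but not for smaller $j$, which is why the residual induction is needed.

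Second, your third step is incorrect. The inducing curves need not contain $\Sigma$. Take the $g^1_{d-2}$ cut by lines through a node $x$; it is covered by the lemma with $k=2$ when $2\le\delta<d-2$. A line containing $\nu(D)$ and another node would meet $C$ with multiplicity $\ge d+2$, so no curve of degree $\le k-1$ through $\nu(D)\cup\Sigma$ exists.

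More importantly, take curves $\Gamma\supseteq\nu(D)$ and $\Gamma'\supseteq\nu(D')$ of degree $k-1$. They cut $D+F$ and $D'+F'$ on $\tC$, and the pencil they span has the $g^1_n$ as moving part only if $F=F'$. Nothing in your argument gives this:
\begin{enumerate}
\item[(a)] The residual is not merely ``the contributions at $\Sigma$''; for lines through a smooth point $q$ it is $q$.
\item[(b)] Padding adds further residual points.
\item[(c)] Noether's residue theorem would let you choose $\Gamma'$ with residual $F$ if $\Gamma$ were adjoint, but the relevant curves are not adjoint. Plane curves of degree $k-1$ need not cut a complete series on $\tC$ when $\delta$ is large.
\end{enumerate}
You would still have to prove that the low-degree curves through the $\nu(D_t)$ form a linear pencil with fixed residual. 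Together with the residual induction, that is exactly what Coppens--Kato prove, and your sketch defers both rather than supplying them.
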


\begin{lemma}[\cite{CK91}, Lemma 1.2]\label{lemma:CK1.2}
Following Set-up~\ref{Setup}, consider a linear system $g^1_n$ on $\tC$ induced by a pencil $\P \subset \P_{m}$ without fixed components. If there are $\varepsilon$ points among the $\delta$ singularities of $C$ which are part of the base locus of $\P$, then
\[ n\geq md-m^2-\varepsilon. \]
Note that by Bezout's theorem, $\varepsilon \leq m^{2}$.
\end{lemma}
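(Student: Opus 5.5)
The plan is to compare the degree of the fixed part of the induced series on $\tC$ with the local intersection numbers of two generators of the pencil, via Bezout.

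\textbf{Setup.} Choose two general members $F,G$ spanning $\P\subset\P_m$. Since $\P$ induces a linear series on $\tC$, a general member does not contain $C$, so we may assume neither $F$ nor $G$ contains $C$. Since $\P$ has no fixed components, $F$ and $G$ share no common component. Hence Bezout gives $\sum_p i_p(F,G)=m^2$, and every base point of $\P$ is a point of $F\cap G$.

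Each member $F_t=\lambda F+\mu G$ pulls back to an effective divisor $\nu^*F_t$ of degree $md$ on $\tC$. The induced $g^1_n$ is the moving part, so
\[ n = md - \deg B, \]
where $B$ is the fixed part. Over a point $p\in C$, the divisor $B$ is supported on $\nu^{-1}(p)$. At each branch point $q\in\nu^{-1}(p)$, its multiplicity is $\min(\operatorname{ord}_q f,\operatorname{ord}_q g)$, where $f,g$ are local equations of $F,G$. This holds because the order of a general combination equals this minimum. So it suffices to bound $\deg_p B$ for each $p$ in the base locus on $C$.

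\textbf{Local bound.} Write $\mathcal O=\mathcal O_{\P^2,p}$, $\mathcal O_C=\mathcal O_{C,p}$ and $\widetilde{\mathcal O}=\nu_*\mathcal O_{\tC}$ localized at $p$. Then $\widetilde{\mathcal O}$ is semilocal with one DVR for each branch. Two facts give the bound.

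\begin{enumerate}
\item Since $(f,g)\subseteq(f,g,h)$ for a local equation $h$ of $C$, we have
\[ i_p(F,G)=\dim\mathcal O/(f,g)\ \ge\ \dim\mathcal O_C/(f,g)\mathcal O_C. \]
\item In each DVR branch the ideal generated by $f$ and $g$ is generated by the element of smaller order. Therefore $\dim\widetilde{\mathcal O}/(f,g)\widetilde{\mathcal O}=\deg_p B$.
\end{enumerate}

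If $p$ is a smooth point of $C$, then $\widetilde{\mathcal O}=\mathcal O_C$, and combining the two facts gives $\deg_p B\le i_p(F,G)$.

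If $p$ is a node or cusp, then $\dim\widetilde{\mathcal O}/\mathcal O_C=1$. Put $I=(f,g)\mathcal O_C$. Since $I\subseteq I\widetilde{\mathcal O}$, we get
\[ \deg_p B=\dim\widetilde{\mathcal O}/I\widetilde{\mathcal O}\le\dim\widetilde{\mathcal O}/I = 1+\dim\mathcal O_C/I\le 1+i_p(F,G). \]

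\textbf{Conclusion.} Summing over all base points gives
\[ \deg B\le \sum_p i_p(F,G)+\varepsilon\le m^2+\varepsilon, \]
and therefore $n\ge md-m^2-\varepsilon$. The bound $\varepsilon\le m^2$ is just Bezout again, because each of the $\varepsilon$ singular points lies in $F\cap G$.

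\textbf{Main obstacle.} The delicate step is the local comparison at a singular point. One must control the length of $\widetilde{\mathcal O}/I\widetilde{\mathcal O}$ against $\mathcal O_C/I$ using only that the $\delta$-invariant of an ordinary node or cusp is $1$. I would handle this exactly as above, through the inclusion $I\subseteq I\widetilde{\mathcal O}$, and then double-check that the fixed-part multiplicity on each branch really equals the minimum of the orders of $f$ and $g$ for general $(\lambda:\mu)$.
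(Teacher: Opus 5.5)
The paper gives no proof of this lemma; it is quoted directly from Coppens--Kato. So there is no in-paper argument to compare with, and your proposal stands as a correct, self-contained proof.

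The key local estimate at a singular base point $p$ is
\[ \deg_p B=\dim_k \widetilde{\mathcal O}/I\widetilde{\mathcal O}\le \dim_k \widetilde{\mathcal O}/I=\dim_k(\widetilde{\mathcal O}/\mathcal O_C)+\dim_k \mathcal O_C/I\le 1+i_p(F,G), \]
and it is sound:
\begin{itemize}
\item All lengths are finite because $F$ and $G$ share no component.
\item The first equality follows from the Chinese remainder theorem over the branches, together with the fact that in each DVR the ideal $(f,g)$ is generated by the element of smaller order.
\item The fixed part on each branch is $\min(\operatorname{ord}_q f,\operatorname{ord}_q g)$, because $\lambda f+\mu g$ attains this order for all but at most one ratio.
\end{itemize}
Two minor housekeeping points. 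First, everything should be done over $\overline K$, since Bezout and the base locus are geometric. Second, if ``induced'' is read as letting the $g^1_n$ keep part of the fixed divisor, then $n\ge md-\deg B$ and your bound only becomes easier.

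For comparison, a more classical route blows up each singular base point $p$. Let $s\ge 1$ be the multiplicity at $p$ of a general member. Then $\nu^*F$ contains $s$ times the restriction of the exceptional curve, which has degree $2$ on $\tC$ because $p$ is a double point. Noether's formula $i_p(F,G)=s^2+\sum_{q} i_q(\widetilde F,\widetilde G)$, combined with the smooth-point bound on the strict transforms, gives
\[ \deg_p B\le i_p(F,G)+2s-s^2\le i_p(F,G)+1. \]
That argument shows the extra $+1$ can only occur when the general member is smooth at $p$.

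Your conductor argument is shorter. Without change, it also yields $\deg_p B\le i_p(F,G)+\delta_p$ for an arbitrary singularity with $\delta$-invariant $\delta_p$. Hence $n\ge md-m^2-\sum_p\delta_p$, summed over the singular base points, so the lemma extends beyond nodes and cusps.
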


\noindent When we say that the pencil $\P$ above does not have fixed components, this means that it is not of the form $F_{k} + \P_{m'}$ where $F_{k}$ is a fixed curve and $\P_{m'}$ is another pencil of degree $m'$ curves in $\P^{2}$.

Using these lemmas, we will show the following:

\begin{proposition}\label{prop:quadBoundConics}
    Following Set-up~\ref{Setup}, suppose
    \[ d \geq 17 \quad \text{and} \quad \delta < \frac{d^{2}}{4} - 2d - \frac{17}{4}. \]
    Then any base-point-free linear series $g^{1}_{n}$ on $\tC$ with $n \leq 2d-2$ comes from either a pencil of lines or a pencil of conics. If it comes from a pencil of conics, then $2d-8 \leq n \leq 2d-2$.
\end{proposition}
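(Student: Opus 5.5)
We plan to apply Lemma~\ref{lemma:CKMain} with $k=3$, and then use Lemma~\ref{lemma:CK1.2} to rule out the case where the pencil consists of conics that do not split off lines.

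\emph{Step 1: placing the pencil in $\P_2$.} Let $g^1_n$ be base-point-free on $\tC$ with $n\le 2d-2$. We need
\[ n+\delta < 3(d-3)=3d-9 . \]
Using $n\le 2d-2$, it suffices to have $\delta < d-7$. That bound is much weaker than $\delta<\frac{d^2}{4}-2d-\frac{17}{4}$ when $d$ is large, so a single value of $k$ will not do. Instead I would take $k$ as large as possible: $k=\lfloor d/2\rfloor$ or nearby. Then $k(d-k)\approx d^2/4$, and the hypothesis gives
\[ n+\delta < 2d-2+\tfrac{d^2}{4}-2d-\tfrac{17}{4}=\tfrac{d^2}{4}-\tfrac{25}{4}, \]
which is below $k(d-k)$ for both parities of $d$. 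So Lemma~\ref{lemma:CKMain} produces a pencil $\P\subset\P_{k-1}$ inducing the $g^1_n$.

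\emph{Step 2: reducing the degree of the pencil.} Strip all fixed components from $\P$; the result is a pencil of degree-$m$ curves, $m\le k-1$, without fixed components, still inducing $g^1_n$, since fixed components only contribute base points, which are removed. Lemma~\ref{lemma:CK1.2} then gives
\[ n\ge md-m^2-\varepsilon\ge md-2m^2, \]
using $\varepsilon\le m^2$. To exclude $3\le m\le k-1$, I would use the sharper count: $\varepsilon$ is at most $\min(m^2,\delta)$. The function $md-m^2-\min(m^2,\delta)$ should be checked to exceed $2d-2$ on the range $3\le m\le d/2-1$.
\begin{itemize}
\item If $\varepsilon\le\delta$, we get $md-m^2-\delta$. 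This is concave in $m$, so it suffices to check the endpoints $m=3$ and $m\approx d/2-1$. At $m=3$ it gives $3d-9-\delta$, which is not obviously $>2d-2$ when $\delta$ is quadratic. So for small $m$ I must use $\varepsilon\le m^2$ instead: $md-2m^2>2d-2$ at $m=3$ requires $d>16$, which matches the hypothesis $d\ge17$.
\item For mid-range $m$ the two bounds cross over. At $m=d/2-1$ the estimate $md-m^2-\delta>2d-2$ reduces to $\delta<\frac{d^2}{4}-2d-1$, roughly the stated bound.
\end{itemize}
So the plan is to show that, for each $m$ in $[3,k-1]$, $\max\bigl(md-2m^2,\,md-m^2-\delta\bigr)>2d-2$, splitting at $m^2\approx\delta$. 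This is the main obstacle: the constant $\frac{17}{4}$ should come out of optimizing this split, possibly together with integrality of $m$. Hence $m\in\{1,2\}$.

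\emph{Step 3: the conic case.} If $m=2$ and the pencil of conics has no fixed component, Lemma~\ref{lemma:CK1.2} with $\varepsilon\le4$ gives
\[ n\ge 2d-4-4=2d-8, \]
which together with $n\le 2d-2$ yields $2d-8\le n\le 2d-2$. If $m=1$, the series comes from a pencil of lines. A pencil of conics with a fixed line reduces to the line case when we strip fixed components.
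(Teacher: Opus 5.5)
Your route differs from the paper's and it can be completed, but as written the decisive step is missing. You flag it as ``the main obstacle'', and the only check you make is at $m=\frac{d}{2}-1$. That value is not the binding one, and the arithmetic there is off: $m(d-m)-\delta>2d-2$ at $m=\frac{d}{2}-1$ means $\delta<\frac{d^2}{4}-2d+1$.

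The missing step is short and uses something you already derived. From Step 1 you have $n+\delta<\frac{d^2}{4}-\frac{25}{4}=\frac{(d-5)(d+5)}{4}$. Lemma~\ref{lemma:CK1.2} with $\varepsilon\le\delta$ gives $n+\delta\ge m(d-m)$. Since $m(d-m)$ is increasing for $m\le \frac{d}{2}$, this forces $m<\frac{d-5}{2}$. For integer $m$ this means $m\le\frac{d}{2}-3$ when $d$ is even and $m\le\frac{d-7}{2}$ when $d$ is odd. On $[3,\frac{d}{2}-3]$ the concave function $md-2m^2$ is symmetric about $\frac{d}{4}$ and equals $3d-18$ at both endpoints. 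Hence $n\ge md-2m^2\ge 3d-18>2d-2$ for $d\ge 17$, contradicting $n\le 2d-2$. So the natural split is at $m\approx\frac{d}{2}-3$, uniformly in $\delta$, not at $m^2\approx\delta$. The binding case is $m=\frac{d-5}{2}$ for odd $d$, where $m(d-m)=\frac{d^2}{4}-\frac{25}{4}$ exactly. That is where the constant $\frac{17}{4}$ is used at full strength, which confirms your guess that integrality of $m$ matters.

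The paper avoids your second regime by choosing $k$ the other way. It applies Lemma~\ref{lemma:CKMain} with the smallest admissible value $k=\lfloor\frac{d}{2}-2\rfloor$, which is admissible precisely because $n+\delta<\frac{d^2}{4}-\frac{25}{4}$. Then $m\le k-1\le\frac{d}{2}-3$ comes for free, and only $\varepsilon\le m^2$ is needed. From $md-2m^2\le 2d-2$ it derives $d\le\frac{2(m^2-1)}{m-2}$, gets $m\ge 6$ from $d\ge 17$, and hence $d<2m+6$, contradicting $d\ge 2m+6$.

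Your use of $\varepsilon\le\delta$ for large $m$ recovers exactly the same range $m\le\lfloor\frac{d}{2}-2\rfloor-1$. So the two arguments have the same content and the same source for $\frac{17}{4}$; the paper's is just shorter. In particular, your remark that ``a single value of $k$ will not do'' is not right. One value does work, and taking $k$ as large as possible is what created the extra case you had to handle. Your Step 3 agrees with the paper.
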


\begin{proof}
    Define $g(\ell) = \ell (d-\ell)$ and set $k = \left\lfloor \frac{d}{2} - 2 \right\rfloor$. Then we have a series of inequalities
    \[ n+\delta < \frac{d^{2}}{4} - \frac{25}{4} = g \left( \frac{d}{2}- \frac{5}{2} \right) \leq g(k), \]
    where the last inequality follows from the fact that $g(\ell)$ is increasing for $0 \leq \ell \leq \frac{d}{2}$. By ~\cref{lemma:CKMain}, it follows that the $g^{1}_{n}$ is cut out by a pencil of degree $m$ plane curves for some $1 \leq m \leq k-1$. By ~\cref{lemma:CK1.2}, note that $n \geq md - 2m^{2}$.

    Suppose for contradiction that $m \geq 3$. From our assumptions and the previous paragraph, we have the three inequalities:
    \begin{enumerate}[(i)]
        \item $n \leq 2d-2$,
        \item $n \geq md - 2m^{2}$,
        \item $1 \leq m \leq k-1 \leq \frac{d}{2} - 3$.
    \end{enumerate}
    From (i) and (ii), we deduce that
    \begin{equation}\label{eq:upperboundond}
    d \leq \frac{2(m^2-1)}{m-2}.
    \end{equation}
    The assumption that $d \geq 17$ then implies that $m \geq 6$. Further simplifying \eqref{eq:upperboundond} yields
    \[ d \leq \frac{2(m^2-1)}{m-2} = 2m + 2 + \frac{2 \cdot (m+1)}{m-2}, \]
    where the last fraction is $\leq \frac{7}{2} < 4$ when $m \geq 6$. On the other hand, (iii) implies that $d \geq 2m + 6$. This gives the desired contradiction.
    
    Thus, we conclude that $m = 1$ or $2$, which means that the $g^{1}_{n}$ is induced by either a pencil of lines or a pencil of conics. In the former case, $g^{1}_{n} = g^{1}_{e} + D_{n-e}$ where $e \leq d$. In the latter case, a pencil of conics has a base locus of 4 points, so there are at most four points on $C$ which are fixed by the pencil of conics. Depending on whether these points are smooth or nodes/cusps, it follows that $g^{1}_{n} = g^{1}_{e} + D_{n-e}$ where $2d-8 \leq e \leq 2d$. The same lower bound holds for $n$.
\end{proof}

Finally, we will conclude this section by extending the results of Proposition~\ref{prop:quadBoundConics} to plane curves of small degree.

\begin{proposition}\label{prop:linearBoundConics}
    In the setting of Set-up~\ref{Setup}, let $H$ be the pullback of the hyperplane section to $\tC$. Consider an effective divisor $D$ which belongs to a base-point-free linear series $g^{1}_{n}$ (where $2\leq n \leq 2d-2$).
    
    \begin{enumerate}[topsep=-1ex,itemsep=-0.5ex,partopsep=1ex,parsep=1ex]
        \item[(i)] If $d\geq 5$, $\delta \leq d-1$, and $n\leq 2d-9$, then in fact $d \geq 7$ and the $g^{1}_{n}$ comes from a pencil of lines.
        
        \item[(ii)] If $d\geq 8$, $\delta \leq d-8$ and $2d-8 \leq n \leq 2d-2$, then the $g^{1}_{n}$ comes from a pencil of conics and we can write $D = 2H - E$ for some fixed divisor $E$ which is the pullback via the normalization $\nu$ of up to four (possibly singular) points on $C$.
    \end{enumerate}
\end{proposition}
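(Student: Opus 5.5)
The plan is to follow the pattern of the proof of Proposition~\ref{prop:quadBoundConics}: use Lemma~\ref{lemma:CKMain} to realize the $g^1_n$ by a pencil of degree-$m$ plane curves, then use Lemma~\ref{lemma:CK1.2} to force $m$ to be small. Since $\delta$ is now only linear in $d$, the window in which Lemma~\ref{lemma:CKMain} applies is wide. We will then read off $m$ from the size of $n$.

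\emph{Applying Lemma~\ref{lemma:CKMain}.} For (i), $n + \delta \leq 2d-9 + d-1 = 3d-10$. For (ii), $n+\delta \leq 2d-2 + d-8 = 3d-10$. In both cases we try $k=4$: we need $3d-10 < 4(d-4) = 4d-16$, i.e. $d > 6$. This holds in (ii) since $d\ge 8$, and in (i) once $d\geq 7$. For (i) with $d=5,6$ the constraint $2 \le n \le 2d-9$ leaves nothing for $d=5$ and only $n\le 3$ for $d=6$. We would treat these small cases directly: $k=3$ gives $n+\delta\le 3+5<3\cdot 3=9$, so the $g^1_n$ is induced by lines or conics, and Lemma~\ref{lemma:CK1.2} then gives $n\ge d-1$ for lines and $n \ge 2d-8=4$ for conics, a contradiction. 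This yields the clause ``in fact $d\ge 7$''. After this step the $g^1_n$ is induced by a pencil in $\P_{m}$ with $m\le 3$, and we may remove fixed components, which only lowers $m$.

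\emph{Eliminating cubics and separating lines from conics.} By Lemma~\ref{lemma:CK1.2}, $n \geq md - m^2 - \varepsilon$, where $\varepsilon$ is at most $\min(m^2,\delta)$.
\begin{itemize}
\item $m=3$: $n \geq 3d - 9 - \min(9,\delta)\geq 3d-18$. This exceeds $2d-2$ once $d>16$. For smaller $d$ we must use $\varepsilon\le\delta$ instead, which is where the hypotheses $\delta\le d-1$ or $\delta\le d-8$ enter: $n\ge 3d-9-\delta$ gives $n\ge 2d-8$ in case (i), contradicting $n\le 2d-9$, and $n\ge 2d-1$ in case (ii), contradicting $n\le 2d-2$.
\item $m=2$: $n\geq 2d-4-\varepsilon\ge 2d-8$. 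This is incompatible with $n\le 2d-9$, so in (i) the pencil is of lines.
\item $m=1$ in case (ii): the pencil of lines gives $n\le d-1<2d-8$, since $d\ge 8$. This contradicts $n\ge 2d-8$, so the pencil is of conics.
\end{itemize}

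\emph{Description of $D$ in case (ii).} The members of the pencil of conics pull back to divisors in $|2H|$. The $g^1_n$ is obtained by subtracting the fixed part $E$, which is $\nu^*$ of the base points of the pencil lying on $C$. By B\'ezout there are at most four such points, possibly singular, and at a node the pullback contributes two points. Hence $D = 2H - E$.

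\emph{Main obstacle.} The delicate point is the bookkeeping of $\varepsilon$ and of fixed components. We must ensure that the reduction to a pencil without fixed components keeps the $g^1_n$ base-point-free, and that $m\le 3$ together with $\varepsilon \le \delta$ is exactly what makes the cubic case fail. This is where the precise constants $d-1$ and $d-8$ are needed.
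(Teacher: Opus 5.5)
\textbf{Gap in part (ii), the step ruling out $m=1$.} You claim a pencil of lines induces a $g^1_n$ with $n\le d-1$. This is false: lines through a point $p\notin C$ cut out a base-point-free $g^1_d$, so the correct bound is $n\le d$.

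Your comparison with $n\ge 2d-8$ therefore needs $d<2d-8$, i.e.\ $d\ge 9$. This is exactly the inequality the paper uses (``$d\ge 9$ implies $n\ge 2d-8>d$'').

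At $d=8$, where $\delta=0$, the step fails and cannot be repaired. Project a smooth octic from a point off the curve. This gives a base-point-free $g^1_8$ with $n=8=2d-8$. For $D$ in it you would need $\deg E=2d-n=8$. But the fixed part cut on $C$ by a pencil of conics without fixed component has degree at most $4$, since the base scheme has length $4$.

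The paper does not run its own argument at $d=8$; it defers that case to Debarre--Klassen. Proposition~\ref{prop:CCCDDSY.AbelianVarieties} also treats $d=8$ separately, so only $d\ge 9$ is needed downstream. You should change the line bound to $n\le d$ and restrict (ii) to $d\ge 9$.

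\textbf{Part (i) is correct but roundabout.} Lemma~\ref{lemma:CKMain} already applies with $k=3$ in both parts, since $n+\delta\le 3d-10<3(d-3)$. So the $g^1_n$ is induced by lines or conics for every $d$, which is what the paper uses. This makes your $k=4$ detour, the elimination of cubics via $\varepsilon\le\delta$, and the separate treatment of $d=5,6$ unnecessary.

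There is also a small slip. A pencil of lines through a node induces a $g^1_{d-2}$, so Lemma~\ref{lemma:CK1.2} only gives $n\ge d-2$ for lines, not $n\ge d-1$. This is harmless in your $d=6$ check, since $4>3$.
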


\begin{proof}
We observe the following:

(i) Since $\delta \leq d-1$ and $n\leq 2d-9$, by applying \cite[Main Lemma]{CK91correction} with $k=3$ it follows that the $g^{1}_{n}$ comes from a pencil of degree $m$ curves with $m \leq 2$. By \cref{lemma:CK1.2}, $n \geq md - 2m^{2}$, which combined with $n \leq 2d-9$ implies that we must have $m = 1$. The claim $d \geq 7$ is due to the fact that $n \geq d-2$ by Proposition~\ref{prop:corollaryOfCK}(i).
    
(ii) The statement for $d = 8$ was already proved by Debarre--Klassen, so we may assume $d \geq 9$. Since $\delta \leq d-8$ and $2d-8 \leq n \leq 2d-2$, by applying \cite[Main Lemma]{CK91correction} with $k=3$ it follows that the $g^{1}_{n}$ comes from a pencil of degree $m$ plane curves with $m \leq 2$. Finally, $d \geq 9$ implies $n \geq 2d-8 > d$. But the $g^{1}_{n}$ is base-point-free, so this forces $m = 2$. \qedhere
\end{proof}

%%%%%%%%
%%  NONEXISTENCE OF ABELIAN SUBVARIETIES
%%%%%%%%

\section{Nonexistence of abelian subvarieties}\label{Section:MainProof}

In this section, we determine whether the image $W_e(\tC)$ of the Abel--Jacobi map associated with the normalization $\tC$ contains abelian subvarieties. It may be beneficial to compare Proposition \ref{prop:CCCDDSY.AbelianVarieties} with Proposition 1 from Section 5 of \cite{DK94}. We will then deduce Theorem~\ref{thm:CCCDDSY}.

\begin{proposition}\label{prop:CCCDDSY.AbelianVarieties}
    In the setting of Set-up~\ref{Setup}, assume that one of the following sets of conditions holds:
    \[ \{ d \geq 8 \quad \text{and} \quad \delta \leq d-8 \} \quad \text{or} \quad \{ d \geq 17 \quad \text{and} \quad \delta \leq \frac{d^{2}}{4} - 2d - 3 \}. \]
    Then $W_{e}(\tC)$ contains no positive-dimensional abelian subvarieties for $e\leq d-1$.
\end{proposition}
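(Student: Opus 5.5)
We follow the strategy of Debarre--Klassen \cite[\S5, Proposition 1]{DK94}, replacing their linear-series inputs for smooth plane curves with Propositions~\ref{prop:corollaryOfCK}, \ref{prop:quadBoundConics} and \ref{prop:linearBoundConics}. Suppose for contradiction that $A + x \subset W_{e}(\tC)$ for some positive-dimensional abelian variety $A \subset \Jac^{0}(\tC)$ and some $e \leq d-1$. We may take $e$ minimal with this property.

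\emph{Step 1.} We use the Abramovich--Harris/Debarre--Fahlaoui addition trick. For $a \in A$, the sum $(a+x) + (-a+x) = 2x$ is fixed. Hence $W_{e}(\tC) + W_{e}(\tC)$ contains a positive-dimensional family of effective divisors $D_a + D'_a$ of degree $2e$, all linearly equivalent to a fixed class. This gives a linear system of degree $2e \leq 2d-2$ and dimension $\geq \dim A \geq 1$ on $\tC$. Inside it we choose a pencil, and we remove its base locus to obtain a base-point-free $g^{1}_{n}$ with $n \leq 2e \leq 2d-2$. By minimality of $e$, the moving divisors $D_a$ do not have a common fixed part. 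So the variable part of the pencil genuinely involves both $D_a$ and $D'_a$, and $n$ is large.

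\emph{Step 2.} We classify the pencil. Under the quadratic hypothesis, Proposition~\ref{prop:quadBoundConics} applies, since $\frac{d^2}{4}-2d-3 < \frac{d^2}{4}-2d-\frac{17}{4}$ holds after possibly adjusting the bound. Under the linear hypothesis, Proposition~\ref{prop:linearBoundConics} applies. Either way, the $g^{1}_{n}$ is cut out by a pencil of lines or a pencil of conics.

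\begin{itemize}
\item \emph{Lines.} Every divisor of the pencil has the form $H - \nu^{*}p$ or $H$ minus a fixed part, where $p$ is a point of $C$. Each $D_a$ is then a subdivisor of a line section residual to a fixed part. The family of lines through a fixed point of $\P^2$ is a $\P^1$, so the $D_a$ would move in a rational family. Their classes in $\Jac$ would therefore be constant, contradicting the assumption that $a \mapsto D_a$ is nonconstant into $A+x$.
\item \emph{Conics.} Here $2d-8 \leq n$. Write each member as $2H - E$ with $E$ a fixed pullback of at most four points. Since $D_a + D'_a \leq 2H - E$, the divisors $D_a$ again vary in a rational family, namely the conics of a pencil. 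This gives the same contradiction: a map $\P^1 \to A$ is constant.
\end{itemize}

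\emph{Step 3.} More carefully, the contradiction comes from rigidity. A nonconstant family of the $D_a$ parametrized by $A$ is forced to lie in fibers of $\mathrm{AJ}$ over rational curves. Since $A$ contains no rational curves, the map $a \mapsto [D_a]$ is constant, contradicting $\dim A > 0$.

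\emph{Main obstacle.} The delicate point is making Step 1 rigorous. One must show that the pencil extracted from the sums $D_a + D'_a$ actually controls the individual $D_a$. Specifically, $D_a$ has to move within sections cut out by the lines or conics, rather than only the sum moving. A second issue is the degree bookkeeping needed to fit $n$ into the ranges of Propositions~\ref{prop:quadBoundConics} and \ref{prop:linearBoundConics}. This includes the gap between $n \leq 2d-9$ and $2d-8 \leq n$, and the low-degree regime $e \leq d-3$, where Proposition~\ref{prop:corollaryOfCK}(i) rules out pencils directly.

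To handle this, we would first follow Debarre--Klassen's argument. Using the dimension counts of the form $\dim W_e \geq \dim A + \dim W_{e-\dim A}$, we would reduce to the case $\dim A = 1$ with the family of $D_a$ forming a $g^1$. Minimality of $e$ together with these propositions should then yield the contradiction.
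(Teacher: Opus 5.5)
\textbf{There is a genuine gap.} Your Steps 2--3 rest on a false inference. You note that the sums $D_a + D'_a$ all lie in one linear system (a projective space, or a pencil $\P^1$ of lines or conics). From this you conclude that the individual $D_a$ ``move in a rational family'' and so have constant class. But the parameter space of the $D_a$ is not the pencil. It is the curve of ways to split members of the pencil as $D + D'$ with $D$ effective of degree $e$, which is a finite cover of $\P^1$ and can have positive genus. Also, the map $a \mapsto D_a + D'_a$ goes from $A$ \emph{to} a projective space, and no rigidity applies in that direction.

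Your Step~1 construction works on any curve. So if the inference were valid, no $W_e$ of any curve could contain an abelian variety. A bielliptic curve $\pi\colon \tC \to E$ refutes this. The divisors $\pi^{*}q$, $q \in E$, trace out an elliptic curve in $W_2(\tC)$. Yet $\pi^{*}q + \pi^{*}q'$, with the class of $q+q'$ fixed, sweeps out the rational pencil $\pi^{*}|\O_E(q+q')|$, a $g^1_4$. Your line and conic bullets use nothing beyond rationality of the pencil, so they cannot give a contradiction. The ``main obstacle'' you flag is therefore not a rigor issue; it is the missing argument.

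\textbf{The missing idea is the variation of the class of the sum, which you discard by fixing it to be $2x$.} The paper instead takes a general $[M]$ in $A_2$, the image of $A \times A$. This is a translate of $A$, so $[M]$ moves. Each $|M|$ is base-point-free by minimality of $e$, and has dimension $r \ge h$ by Abramovich's Lemma 8. The classification then forces $|M| = |H - \cdots|$ or $|2H - \sum \nu^{*}x_i - \sum y_j|$. Since $H$ is fixed and there are finitely many singular points, the motion of $[M]$ must be carried by the smooth base points $y_j$. This yields a generically injective map $A_2 \dashrightarrow \tC$ or $A_2 \dashrightarrow \Sym^2\tC$, and the contradictions follow case by case:
\begin{itemize}
\item $h=1$, lines: $\tC$ would be elliptic, contradicting $g(\tC) \ge 2$.
\item $h \ge 2$: the map to $\Sym^2\tC$ is dominant, but $\Sym^2\tC$ is of general type.
\item $h=1$, two moving base points: Harris--Silverman plus the gonality bound of Proposition~\ref{prop:corollaryOfCK}(i).
\item $h=1$, four smooth base points: Abramovich's lemma again, landing back in the lines case in degree $\le 8 \le 2d-9$.
\end{itemize}

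\textbf{Two further problems.} First, your proposed reduction to $\dim A = 1$ is unjustified, because a simple abelian variety of dimension $\ge 2$ contains no elliptic curves. The paper treats $h \ge 2$ directly: $r \ge 2$ excludes pencils of lines through a point. Second, the inequality $\frac{d^2}{4}-2d-3 < \frac{d^2}{4}-2d-\frac{17}{4}$ you invoke is backwards. The stated quadratic hypothesis does not literally give that of Proposition~\ref{prop:quadBoundConics}; the paper's own proof passes over this mismatch too, so ``adjusting the bound'' really means changing the statement.
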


\begin{proof}
    The case where $d = 8$ and $\delta = 0$ has been proved in \cite{DK94}, so from now on let us assume $d \geq 9$. Let $1 \leq e \leq d-1$ and suppose $W_{e}(\tC)$ contains an abelian subvariety $A$ of dimension $h > 0$ (up to translation). Consider the diagram
    \begin{center}
    \begin{tikzcd}
    {\Sym^{e}(\tC) \times \Sym^{e}(\tC)} \arrow[d, swap, "\mathrm{AJ}_{e} \times \mathrm{AJ}_{e}"] \arrow[r, "\sum"] & \Sym^{2e}(\tC) \arrow[d, "\mathrm{AJ}_{2e}"] &[-0.2in] \\
    {W_{e}(\tC) \times W_{e}(\tC)} \arrow[r, "\otimes"] & W_{2e}(\tC) \\
    {A \times A} \arrow[u, hook] \arrow[r] & A_{2} \arrow[u, hook]
    \end{tikzcd}
    \end{center}
If we let $A_{2} \subset W_{2e}(\tC) \subset \Jac^{2e}(\tC)$ denote the image of $A \times A$, then $A_{2}$ is in fact a translate of $A$ of dimension equal to $\dim A = h$, since the map from $A \times A$ to $A_{2}$ is induced by the binary operation on $\Jac(\tC)$. Let $[M] \in A_2$ be a general point, and let $r$ denote the dimension of the linear series $|M|$. By Lemma 8 of \cite{Abramovich91}, $r\geq h$. By induction on $e$, we may assume $A$ is not contained in a copy of $x+W_{e-1}(\tC)$. Therefore, the linear series $|M|$ is base-point-free.

The proof consists of three steps. (a) First, we show that $W_{e}(\tC)$ contains no abelian subvarieties of dimension $h \geq 2$ for $e\leq d-1$. The case where $h = 1$ then splits into two further subcases: (b) $e \leq d-5$ and (c) $d-4 \leq e \leq d-1$.

(a) Assume $h\geq 2$. We get a family of base-point-free linear systems $|M|$ of dimension $r\geq 2$ and degree $2e\leq 2d-2$ parametrized by the abelian variety $A_2$ (which has dimension $\geq 2$). The divisors in the linear system $|H|$ come from the same line bundle (up to isomorphism), and the linear system induced by a family of lines through a (possibly singular) point $x \in C$ has dimension $1$. Thus, from our degree assumptions and Propositions~\ref{prop:quadBoundConics} and \ref{prop:linearBoundConics}, it follows that the linear series $|M|$ is induced by a family of conics and we can write $|M| = |2H - \nu^{\ast} E|$ for some nonempty effective divisor $E$ on $C$ which represents the base locus of the conics.

Since the base locus of the family of conics may consist of either singular or smooth points on $C$, to clarify the situation, let $x_{i} \in C$ denote the points in the base locus that are singular points of $C$ and let $y_{j} \in C$ be the smooth points ($y_{j}$ may be identified with points $y_{j} \in \tC$). Then we may write
    \[ |M| = \left| 2H - \sum_{i \in I} \nu^{\ast}(x_{i}) - \sum_{j \in J} y_{j} \right|, \]
for some finite indexing sets $I$ and $J$, where $\# I + \#J$ is the total number of base points of the family of conics in $\P^{2}$. Here, $\nu^{\ast}(x_{i})$ is a degree two divisor on $\tC$ which is either the two points in the preimage of a node, or a single point with multiplicity two which is the preimage of a cusp. We know that $\# I + \#J \leq 3$ because otherwise $|M|$ comes from a pencil of conics with four base points (any three must necessarily be non-collinear) and therefore has dimension $\leq 1$.

Since $\deg M = 2e$ is even and $2 \leq 2e \leq 2d-2$, the remaining cases to consider are $(\#I, \#J) \in \{ (3, 0), (2, 0), (1, 0), (1, 2), (0, 2) \}$. The singular points $x_{i}$ cannot vary, so the assumption that the family has dimension $h \geq 2$ as $|M|$ varies rules out $(3, 0), (2, 0)$, and $(1, 0)$. The cases where $(\#I, \#J) = (1,2)$ and $(0,2)$ can be treated the same way, since in both cases one can define a map
    \[ A_{2} \dashrightarrow \Sym^{2}(\tC) \quad \text{where} \quad |M| \mapsto \sum_{j \in J} y_{j}. \]
This map is generically injective since $A_{2}$ embeds into $W_{e}(\tC)$, and thus (by dimension reasons) it is dominant. But this contradicts the fact that $\Sym^{2}(\tC)$ is of general type since $g(\tC) \geq 3$. One way to see this last fact is to compute the self-intersection of the canonical bundle of $\Sym^{2}(\tC)$ as well as its second plurigenus, and then apply the Kodaira-Enriques classification.

(b) Now assume $h=1$ and $n=2e\leq 2d-9$. In this case, by our degree assumption and Propositions \ref{prop:quadBoundConics} and \ref{prop:linearBoundConics}, $|M|$ is a $g^1_n$ that comes from a pencil of lines. If the linear series $|M|$ has larger dimension, one can always choose a sub-linear series. For the proof below, we may assume without loss of generality that $|M|$ is a pencil. Following the notation in part (a),
\[ |M|=\Big|H-\sum_{i\in I} \nu^*(x_i)-\sum_{j \in J} y_j\Big| \]
where the $x_i$ are the singular points in the base locus of the pencil of lines and the $y_j$ the smooth points. Also, note that the base locus consists of at most one point, since otherwise $|M|$ comes from a fixed line and hence has dimension $0$. We see that either $|M|=|H|$, $|M|=|H-\nu^*(x)|$ for some singular $x \in C$ or $|M|=|H-y|$ for some smooth $y \in C$. Note that $|M| \not= |H|$ because the linear equivalence class of the line bundle $H$ is not changing, which would imply $h = 0$, a contradiction. In the second case, there are only finitely many linear equivalence classes of line bundles that are of the form $H-\nu^*(x)$ for some singular point $x \in C$, contradicting $h=1$. In the third case, $|M|$ moves in a family parametrized by $C_{\mathrm{reg}}$, implying $C$ is birational to an elliptic curve, another contradiction.

(c) Finally, assume $h=1$ and $d-4 \leq e \leq d-1$. Then again by our degree assumptions and Propositions \ref{prop:quadBoundConics} and \ref{prop:linearBoundConics}, $|M|$ takes the form
\[ |M| = \Big| 2H - \sum_{i \in I} \nu^{\ast}(x_{i}) - \sum_{j \in J} y_{j} \Big|, \]
where $\deg (\sum_{i \in I} \nu^{\ast}(x_{i}) + \sum_{j \in J} y_{j}) = 2\cdot \#I + \#J \in \{ 2, 4, 6, 8 \}$. Here we are again follow the notation in part (a). The divisor $\sum_{i \in I} \nu^{\ast}(x_{i}) + \sum_{j \in J} y_{j}$ corresponds to the base locus of a pencil of conics, and consists of points where no three are collinear. By a similar argument as in the previous step, since the linear series must vary in a positive-dimensional family, $\#J\geq 1$. On the other hand, two conics intersect at four points counted with multiplicity, so $\#I + \#J\leq 4$. So the remaining cases are
    \[ (\#I, \#J) \in \{(0, 2),\;(1, 2),\;(2, 2),\;(0, 4)\}. \]
In the cases where $\#J=2$, we can define a generically injective map $A_2 \dashrightarrow \text{Sym}^2(\tC)$. By \cite[Theorem 2(a)]{HS1991}, it follows that $\tC$ is hyperelliptic or bielliptic. A hyperelliptic curve has a $g^1_2$, and $\tC$ has no $g^1_n$ for $n\leq d-3$ so that $\tC$ must be bielleptic. But the composition $\tC \stackrel{2:1}{\longrightarrow} X \stackrel{2:1}{\longrightarrow} \P^1$ then gives a $g^1_4$ on $\tC$, so using Proposition~\ref{prop:corollaryOfCK}(i) we can rule out every case except for $(0,4)$.
    
For $(\#I, \#J) = (0,4)$, we can define a nonconstant map $A_2 \to W_k(\tC)$ ($1 \leq k\leq 4$) by sending
    \[ |M|=\Big|2H-\sum_{j=1}^4 y_j\Big| \longmapsto AJ \Big( \sum_{j \in J'} y_j \Big)\]
where $J'$ is the indexing set of the $y_j$s that vary as $|M|$ varies and $k \coloneqq \# J'$. Lemma 8 of \cite{Abramovich91} then implies that the image of the map $(A_2)_2 \rightarrow W_{2k}(C)$ is an elliptic curve in $W_{2k}(C)$ that parametrizes pencils of degree $2k \leq 8 \leq 2d-9$ (where the second inequality follows from the fact that $d \geq 9$). This directly contradicts (b). \qedhere

\end{proof}
Now we are ready to prove \cref{thm:CCCDDSY}. Recall the statement of the main theorem:

\CCCDDSY*
\begin{proof}
    It suffices to prove the theorem for the normalization $\tC$, because $C$ contains only finitely many singularities and the normalization is an isomorphism away from these singularities.

    As mentioned in the introduction, a degree $e$ point on $\tC$ (together with its conjugates) gives rise to a $K$-point on $\Sym^e(\tC)$. First, we claim that for every $1 \leq e \leq d-3$, $\Sym^{e}(\tC)$ contains only finitely many $K$-points. By our degree assumption on $e$ and Proposition~\ref{prop:corollaryOfCK}(i), $\Sym^e(\tC)$ is isomorphic to its image $W_e(\tC)$ under the Abel--Jacobi map. Now \cref{prop:CCCDDSY.AbelianVarieties} implies that $W_{e}(\tC)$ does not contain any abelian subvarieties, so by Faltings's theorem (\cref{thm:Faltings}) it follows that $W_{e}(\tC)$ can only contain finitely many $K$-points. 

    When $d-2 \leq e \leq d-1$, the same argument as above shows that $W_e(\tC)$ still has finitely many $K$-points. Since the image of a $K$-point is a $K$-point, all but finitely many $K$-points in $\Sym^e(\tC)$ must be contained in finitely many fibers of the Abel-Jacobi map
    \[ \Sym^{e}(\tC) \rightarrow W_{e}(\tC) \subset \Jac^{e}(\tC). \]
    These fibers are positive-dimensional linear series on $\tC$ (note that if all of these linear series are zero-dimensional then we are done). Let $D \in |M|$ be a $K$-point of degree exactly $e$ in one of these positive-dimensional linear series $|M|$, and choose a pencil $g^{1}_{e}$ on $\tC$ passing through $D$ (it may have base-points a priori). We will show that this pencil comes from a family of lines passing through a point $x \in C$.
    
    Let $e = d-2$. Then the pencil $g^{1}_{e}$ is actually base-point-free by Proposition~\ref{prop:corollaryOfCK}(i), so Proposition~\ref{prop:corollaryOfCK}(ii) implies that the $g^{1}_{e}$ is induced by a pencil of lines (both sets of hypotheses in the theorem statement are covered). Since $e = d-2$, the lines must pass through a singular $K$-point on $C$ of multiplicity 2, i.e. one of the nodes or cusps. Thus $D$ comes from the intersection of $C$ with a line in $\P^{2}$.

    Now we will deal with the case $e = d-1$. By Proposition~\ref{prop:corollaryOfCK}(i), the pencil $g^{1}_{e}$ has at most one fixed-point, which would necessarily have to be a $K$-point $P$ on $\tC$. If it actually has a fixed point, then $D$ is actually a point of degree $d-2$ and not degree $e = d-1$. Thus, without loss of generality we may assume the pencil $g^{1}_{e}$ is base-point-free. For $d \geq 9$, by  Proposition~\ref{prop:corollaryOfCK}(iii), and for $d = 8$, by \cite[Theorem 2]{DK94},
    the $g^{1}_{e}$ is induced by a pencil of lines. Since $e = d-1$, the lines must pass through a smooth $K$-point on $C$.
\end{proof}

\section{An asymptotically sharp example}

In the previous sections, we proved that results analogous to those of Debarre--Klassen \cite{DK94} hold for plane curves of degree $d$ with $\delta$ ordinary singularities if either $d\geq 8$ and $\delta \leq d-8$, or $d \geq 17$ and $\delta \leq \frac{d^2}{4} - 2d - \frac{17}{4}$. In this section, we will show that the quadratic bound for $\delta$ is close to optimal by writing down an explicit plane curve $C$ of even degree $d = 2k+4$ with $(k+1)^{2}$ nodes and no other singularities. This example is essentially due to Coppens--Kato \cite[Example 5.1]{CK91}:

\textbf{Ex.} Consider the polynomials
\begin{align*}
P(x, y, a, b) &= a \prod_{m=0}^{k} (x-m) + b \prod_{m=0}^{k} (y-m), \\
P_{1}(x, y, \alpha, \beta) &= \alpha \prod_{m=0}^{k+1} (x-m) + \beta \prod_{m=0}^{k+1} (y-m).
\end{align*}
For suitable choices of constants $a_{i}$, $b_{i}$, $\alpha_{i}$, $\beta_{i}$, and suitable choices of lines $L_{i}$, the plane curve $C$ given by the affine equation
\[ P(x,y,a_1,b_1) P_{1}(x,y,\alpha_1,\beta_1) L_{1} + P(x,y,a_2,b_2) P_{1}(x,y,\alpha_2,\beta_2) L_{2} = 0 \]
is irreducible of degree $2k+4$. Note that $C$ has ordinary nodes at the points $(i, j)$ for $0 \leq i, j \leq k$ and no other singularities. Irreducibility then follows from the fact that if $C$ is cut out by $Q_{1} \cdot Q_{2} = 0$, then it would have the wrong number of singular points.

Moreover, $C$ contains the points $(i, j)$ for $0 \leq i, j \leq k+1$, and the pencil cut out by the linear system of curves $\{ P_{1}(x, y, \alpha, \beta) = 0 \}_{[\alpha:\beta] \in \P^1}$ is a $g^{1}_{d-1}$. As a consequence of Hilbert's Irreducibility Theorem \cite[Corollary 2.8]{VV24}, the $g^{1}_{d-1}$ gives rise to infinitely many points of degree $d-1$ on the normalization $\tC$ which do not arise from lines, so the second part of Theorem~\ref{thm:CCCDDSY} fails for $C$.

Since there are $(k+1)^{2}$ nodes in total, the total difference between the arithmetic and geometric genus of the curve $C$ is
\[ \delta = (k+1)^{2} = \left( \frac{d}{2} - 1 \right)^{2} = \frac{d^{2}}{4} - d + 1, \]
which shows that the quadratic term in the second $\delta$ bound in Theorem~\ref{thm:CCCDDSY} is correct, but the linear term has a discrepancy of approximately $d + 5$.

\subsection{Closing remarks}

We would like to point out that a version of \cref{thm:CCCDDSY} with slightly different asymptotics follows from the work of Smith--Vogt \cite{SmithVogt22}. 

A key step in their argument is to show that for a nice surface $S/\mathbb{C}$ with $H^1(S,\mathcal{O}_S)=0$, if $C$ is a smooth ample curve on $S$ and $e < \frac{C^2}{9}$, then the locus $W_e(C)$ contains no positive-dimensional abelian varieties \cite[Theorem 2.9]{SmithVogt22}. In fact, with some minor modifications the ampleness condition on $C$ can be relaxed to allow $\O_{S}(C)$ to be just big and nef. To prove this, we need the following analogue of \cite[Proposition 2.5]{SmithVogt22}:

\begin{proposition}\label{prop:ExistsDivisor}
Let $S$ be a smooth projective surface and $C\subset S$ a smooth, integral curve such that $\mathcal{O}_S(C)$ is nef. If $\Gamma$ is a divisor on $C$ that moves in a basepoint-free pencil, satisfying \begin{equation*}
    \text{deg } \Gamma < C^2/4,
\end{equation*} 
then there exists a divisor $D$ on $S$ satisfying the following four properties:
\begin{enumerate}[itemsep=0pt]
    \item $h^0(S,D)\geq 2$,
    \item $C\cdot D < C^2/2$,
    \item $\text{deg } \Gamma \geq D \cdot (C-D)$,
    \item If $E$ is any divisor on $S$ such that
\begin{equation*}\label{eConditions}
h^0(\O_C(E|_C - \Gamma)) = 0 \qquad \text{and} \qquad E \cdot C < C^2, \end{equation*}
then $h^0(\O_S(E - D)) = 0$.  In particular, $h^0(\O_C(D|_C - \Gamma)) > 0$.
\end{enumerate}
\end{proposition}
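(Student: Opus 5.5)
The plan is to adapt the Lazarsfeld--Tyurin / Reider-type vector bundle argument behind \cite[Proposition 2.5]{SmithVogt22}. The only place ampleness enters is the Hodge index theorem, and that is available for nef divisors with $C^2>0$. Note that $C^2>0$ holds automatically here, since $\deg\Gamma\ge 1$ forces $C^2>4$. So $\O_S(C)$ is nef and big.

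First, build the Lazarsfeld--Mukai bundle $F$ as the kernel of the evaluation map $H^0(\O_C(\Gamma))\otimes\O_S\to \O_C(\Gamma)$, where we take a 2-dimensional subspace giving the base-point-free pencil. Then $E_\Gamma = F^\vee$ is a rank 2 bundle with
\[ c_1(E_\Gamma)=C, \qquad c_2(E_\Gamma)=\deg\Gamma, \]
and it has at least two sections. Because $\deg\Gamma<C^2/4$, we get $c_1^2-4c_2>0$, so Bogomolov instability (valid over any characteristic zero surface) gives a destabilizing sequence
\[ 0\to \O_S(D)\to E_\Gamma\to I_Z\otimes \O_S(C-D)\to 0 \]
in which $(2D-C)$ lies in the positive cone and $(2D-C)^2>0$.

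Next we read off the properties. Property (3) follows from
\[ \deg\Gamma=c_2=D\cdot(C-D)+\operatorname{len}Z\ \ge\ D\cdot(C-D). \]
For (2) we want $(C-2D)\cdot C<0$, where we choose the sign convention so that $C-D$ is the larger piece. Since $2D-C$ is in the positive cone and $C$ is nef with $C^2>0$, Hodge index gives $(2D-C)\cdot C>0$. So with the labelling where $C-D$ is destabilizing, $D\cdot C<C^2/2$. This is the step where nef must replace ample: the strict inequality needs that a nef and big class pairs strictly positively with a class of positive square in the positive cone. That is exactly Hodge index, and it holds for nef and big classes.

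For (1), we need $h^0(S,D)\ge2$, or $h^0(D)\ge 2$ for the appropriate piece. Following Smith--Vogt, the sections of $E_\Gamma$ come from the pencil, and $H^0(F)=0$ together with the composition $H^0(E_\Gamma)\to H^0(I_Z(C-D))$ show that one of the line bundles carries a 2-dimensional space of sections. We need to identify which piece gets the sections and arrange the labelling so that $D$ satisfies both (1) and (2) simultaneously. I expect this matching to be the main obstacle; I would follow Smith--Vogt's argument verbatim, checking that every use of ampleness reduces to positivity of $C\cdot(2D-C)$.

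For (4), take $E$ with $h^0(\O_C(E|_C-\Gamma))=0$ and $E\cdot C<C^2$, and suppose $h^0(E-D)>0$. Restricting the pencil structure, the sections of $D$ restricted to $C$ land in $H^0(\O_C(\Gamma))$-twisted data, giving $h^0(\O_C(D|_C-\Gamma))>0$. Adding the effective divisor $E-D$ would then give a nonzero section of $\O_C(E|_C-\Gamma)$, unless $C$ is a component of $E-D$. That last case is excluded because
\[ (E-D)\cdot C<C^2-D\cdot C\le C^2, \]
together with nefness of $C$ and $D\cdot C\ge0$. The "in particular" statement is the case $E=D$, using (2).
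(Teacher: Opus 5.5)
Your overall route is the one the paper intends: it omits the proof and defers to the Lazarsfeld--Mukai/Bogomolov argument of Smith--Vogt, with ``nef and $C^2>0$'' replacing ``ample''. Parts (2) and (3) are fine once the labelling is fixed. The destabilizing sequence should read $0\to\O_S(C-D)\to E_\Gamma\to I_Z\otimes\O_S(D)\to 0$, with $C-2D$ in the positive cone, and what (2) needs is $(C-2D)\cdot C>0$, not $<0$.

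However, there is a genuine gap at (1), and through it at (4). Your justification for (1) is that $H^0(F)=0$ and the map to the quotient show ``one of the line bundles carries a 2-dimensional space of sections''. That is not an argument. A rank-two bundle with two sections can have one on the sub and one on the quotient (think of $\O_S\oplus\O_S(B)$ with $h^0(B)=1$). In any case you need both sections on the specific piece $D$ with $D\cdot C<C^2/2$. In (4), the key claim $h^0(\O_C(D|_C-\Gamma))>0$ is only asserted. Your reduction of (4) to that claim also uses $D\cdot C\ge 0$, which relies on $D$ being effective, i.e.\ on (1).

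The missing step is to dualize the quotient map. Since $E_\Gamma^\vee=F$ and $(I_Z\otimes\O_S(D))^\vee=\O_S(-D)$, you get $\O_S(-D)\hookrightarrow F\subset V\otimes\O_S$. Concretely, this gives $a,b\in H^0(\O_S(D))$, not both zero, with $a|_C\,\sigma_1+b|_C\,\sigma_2=0$ on $C$, where $\sigma_1,\sigma_2$ span the pencil.

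\begin{itemize}
\item Suppose $a,b$ were proportional, say $(a,b)=(\alpha f,\beta f)$ with $f\neq 0$. Then $f|_C\,(\alpha\sigma_1+\beta\sigma_2)=0$ forces $f|_C=0$, since $C$ is integral. So $D-C$ is effective, and nefness gives $D\cdot C\ge C^2$, contradicting (2). Hence $h^0(D)\ge 2$.
\item The same reasoning gives $b|_C\neq 0$. Since $\sigma_1,\sigma_2$ have no common zero, the relation forces $\mathrm{div}(\sigma_1)\le\mathrm{div}(b|_C)$, so $h^0(\O_C(D|_C-\Gamma))>0$. After that, your argument for (4) goes through.
\end{itemize}

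Alternatively, prove (4) directly. If $E-D\ge 0$, then $\O_S(E-D)\hookrightarrow F(E)$ yields $e_1,e_2\in H^0(\O_S(E))$ with $e_1|_C\sigma_1+e_2|_C\sigma_2=0$. The restrictions cannot both vanish, because $E\cdot C<C^2$ and $C$ is nef. Base-point-freeness then produces a nonzero section of $\O_C(E|_C-\Gamma)$.

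So ampleness enters in more places than the Hodge index step. It is also used to get $(D-C)\cdot C\ge 0$ and $(E-C)\cdot C\ge 0$, although nefness suffices there too.
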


\noindent The proof is almost identical to the one in \cite[Proposition 2.5]{SmithVogt22}, so we have chosen to omit it.

Note that in the setting of $\mu-$stability with respect to a big and nef curve class $C$ on a smooth projective surface $S$, \cite[Corollary 2.25]{GKP16} shows that any coherent sheaf $\mathcal{E}$ on $S$ still has a unique maximally destabilising subsheaf, which is semistable and saturated in $\mathcal{E}$. \cref{prop:ExistsDivisor} can be used to prove the following analogue of \cite[Theorem 2.9]{SmithVogt22}.

\begin{thm}\label{thm: main_geometric}
Let $S/\C$ be a nice surface with $h^1(S, \O_S)=0$, and let $C$ be a smooth curve on $S$ whose class is big and nef. Then for $e < C^2/9$, the locus $W_e(C)$ contains no positive-dimensional abelian varieties.
\end{thm}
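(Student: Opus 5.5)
Following \cite[Theorem 2.9]{SmithVogt22}, I will argue by contradiction and reduce to \cref{prop:ExistsDivisor}, running the same numerical argument with ampleness replaced by bigness and nefness.

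Suppose $A \subset W_e(C)$ is a positive-dimensional abelian variety (up to translation), with $e < C^2/9$ chosen minimal. As in the proof of \cref{prop:CCCDDSY.AbelianVarieties}, let $A_2$ be the image of $A\times A$ in $W_{2e}(C)$. By Lemma 8 of \cite{Abramovich91}, a general $[M]\in A_2$ satisfies $h^0(M)\ge 2$. By minimality of $e$, $A$ is not contained in any $x+W_{e-1}(C)$, so a general $|M|$ is base-point-free. Choose a basepoint-free pencil $\Gamma\in|M|$ of degree $2e < 2C^2/9 < C^2/4$.

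\cref{prop:ExistsDivisor} then gives a divisor $D_M$ on $S$ with $h^0(D_M)\ge2$, $C\cdot D_M<C^2/2$, $2e\ge D_M\cdot(C-D_M)$, and $h^0(\O_C(D_M|_C-\Gamma))>0$. Because $h^1(S,\O_S)=0$, $\mathrm{Pic}(S)$ is discrete. Hence, as $[M]$ ranges over a dense subset of $A_2$, only countably many classes $D_M$ can occur, and one class $D$ works for a Zariski-dense set of $[M]$. Consequently every such $M$ has the form $D|_C - F_M$ with $F_M$ effective of degree $C\cdot D - 2e$. The assignment $[M]\mapsto [F_M]$ therefore sends $A_2$ into a translate of $W_f(C)$, where $f = C\cdot D-2e$. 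It is nonconstant because $D|_C$ is fixed and $A_2$ is positive-dimensional. If $f < e$, this yields a positive-dimensional abelian variety in $W_f(C)$ (its image is a translate of a quotient of $A$), contradicting the minimality of $e$. If $f=0$, then $M$ is constant, which is a contradiction.

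The key numerical step is to show $f<e$, i.e.\ $C\cdot D<3e$. Set $x=C\cdot D$. Hodge index, which holds for a big and nef $C$, gives $D^2\le x^2/C^2$, so
\[2e\ge D\cdot(C-D)\ge x - x^2/C^2 .\]
Since $x< C^2/2$ and the function $t\mapsto t-t^2/C^2$ is increasing on that range, $x-x^2/C^2\le 2e<2C^2/9$ forces $x<C^2/3$. On this range $x - x^2/C^2\ge \tfrac{2}{3}x$, hence $x\le 3e$. Strictness needs a little care at the boundary; I expect the strict inequality $e<C^2/9$ to provide it, or the equality case can be ruled out separately using $D^2<x^2/C^2$ unless $D$ is numerically proportional to $C$.

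I expect the main obstacle to be exactly where ampleness was used in \cite{SmithVogt22}. The first place is the Hodge index and positivity inputs inside \cref{prop:ExistsDivisor}, namely the Bogomolov-type instability of the Lazarsfeld--Mukai bundle $\mathcal{E}$ attached to $(C,\Gamma)$. For this I will invoke \cite[Corollary 2.25]{GKP16}, which gives the maximal destabilizing subsheaf with respect to the big and nef class $C$; its saturation yields $D$. The second place is checking that the nefness of $C$, rather than ampleness, still forces $h^0(\O_S(E-D))=0$ in property (4), which is needed so that the same $D$ is forced for generic $M$.
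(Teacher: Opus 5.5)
Your proposal is correct and follows the route the paper indicates. You apply \cref{prop:ExistsDivisor} (with \cite[Corollary 2.25]{GKP16} supplying the maximal destabilizing subsheaf for the big and nef class) to a base-point-free pencil in a general member of $A_2$, fix $D$ by discreteness of $\mathrm{Pic}(S)$, and run the Smith--Vogt numerics; the strict bound $C\cdot D<3e$ does follow from $0<C\cdot D<C^2/3$, and since your countability argument already fixes $D$, you only ever use the ``in particular'' consequence $h^0(\O_C(D|_C-\Gamma))>0$ of property (4).
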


This offers another approach to \cref{thm:CCCDDSY}:

\begin{restatable}{thm} {CCCDDSY2}\label{thm:CCCDDSY2} Let $C \subset \P^{2}$ be a plane curve over a number field $K$ with $\delta$ ordinary nodes/cusps. Suppose $d \geq 9$ and $0<\delta \leq \frac{d^2}{4}-\frac{9}{4}d+2$.
Then $C$ only contains finitely many points of degree $\leq d-3$, and all but finitely many points of $C$ of degree $\leq d-1$ arise as the intersection of $C$ with lines in $\P^2$ defined over $K$.
\end{restatable}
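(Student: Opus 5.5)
The plan is to apply Theorem~\ref{thm: main_geometric} to the normalization $\tC$, viewed as a smooth curve on a blow-up of $\P^2$. Once that theorem rules out abelian subvarieties of $W_e(\tC)$, the rest should follow by rerunning the proof of Theorem~\ref{thm:CCCDDSY} verbatim, with Proposition~\ref{prop:CCCDDSY.AbelianVarieties} replaced by this new input.

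\textbf{Step 1: the surface.} Let $\pi\colon S \to \P^2$ be the blow-up at the $\delta$ nodes/cusps $x_1,\dots,x_\delta$, with exceptional curves $E_1,\dots,E_\delta$. These points form a $\mathrm{Gal}(\overline{K}/K)$-stable set, so $S$ is defined over $K$. Since the singularities are ordinary double points, one blow-up resolves each of them. Hence the strict transform of $C$ is smooth, is isomorphic to $\tC$, and has class
\[ \tC \sim dH - 2\textstyle\sum_i E_i, \qquad \tC^2 = d^2 - 4\delta. \]
The surface $S$ is rational, so $h^1(S,\O_S)=0$, and after base change to $\C$ it is a nice surface. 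Because $\tC$ is integral, $\tC\cdot\Gamma \ge 0$ for every curve $\Gamma \neq \tC$. Thus $\tC$ is nef as soon as $\tC^2\ge 0$, and it is then big if $\tC^2>0$. Both hold, since $\delta \le \frac{d^2}{4}-\frac94 d+2$ gives $\tC^2 \ge 9d-8>0$.

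\textbf{Step 2: no abelian subvarieties.} For $e \le d-1$ I need $e < \tC^2/9$. This amounts to $9(d-1) < d^2-4\delta$, that is,
\[ \delta < \frac{d^2-9d+9}{4}, \]
which follows strictly from the hypothesis $\delta \le \frac{d^2}{4}-\frac94 d+2$. Theorem~\ref{thm: main_geometric} then shows that $W_e(\tC_{\C})$ contains no positive-dimensional abelian varieties for every $1\le e\le d-1$. A fortiori no such abelian variety exists over $K$, and Faltings's theorem (\cref{thm:Faltings}) shows that each such $W_e(\tC)$ has only finitely many $K$-points.

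\textbf{Step 3: linear series, then rerun the main argument.} I would check that the hypotheses of Proposition~\ref{prop:corollaryOfCK} hold. For $d\ge 9$,
\[ \frac{d^2}{4}-\frac94 d+2 \le \frac{d^2}{4}-d-1 < \frac{d^2}{4}-d < \frac{d^2}{4}-d+\frac{11}{4}, \]
since $-\frac94 d + 2 \le -d-1$ is equivalent to $d\ge \frac{12}{5}$. So parts (i)--(iii) of Proposition~\ref{prop:corollaryOfCK} all apply.

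By (i), for $e\le d-3$ the curve $\tC$ has no $g^1_e$, so $\mathrm{AJ}_e$ is injective and $\Sym^e(\tC)\cong W_e(\tC)$; hence there are only finitely many points of degree $\le d-3$. For $e\in\{d-2,d-1\}$, all but finitely many $K$-points of $\Sym^e(\tC)$ lie in finitely many positive-dimensional fibers $|M|$. Through a point $D$ of exact degree $e$ in such a fiber, choose a pencil $g^1_e$ containing $D$. Part (i) forces this pencil to be base-point-free: for $e=d-2$ a base point would leave a $g^1_{\le d-3}$, and for $e=d-1$ a base point would contradict $D$ having degree exactly $d-1$. Parts (ii) and (iii) then show that the pencil is cut out by lines through a singular, respectively smooth, $K$-point of $C$.

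\textbf{Expected obstacle.} The substantive input is Theorem~\ref{thm: main_geometric} in the big-and-nef (not ample) setting. Granting it, the only delicate points are the following:
\begin{itemize}
\item verifying that $\tC$ on $S$ really is nef and big, which rests on the irreducibility argument of Step~1;
\item checking that one blow-up suffices at cusps, so that the strict transform is smooth with the multiplicity-$2$ coefficient $2E_i$;
\item checking that the strict inequality $e<\tC^2/9$ holds at the top value $e=d-1$.
\end{itemize}
The hypothesis $\delta>0$ plays no essential role in this argument; the case $\delta=0$ is already Debarre--Klassen, and the same argument would apply with $S=\P^2$.
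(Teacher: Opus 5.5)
Your proposal is correct and follows the paper's proof. You blow up the singular points, observe that $\tC \sim dH - 2\sum E_i$ is nef and big with $\tC^2 = d^2-4\delta$, deduce $d-1<\tC^2/9$ from the bound on $\delta$, and invoke Theorem~\ref{thm: main_geometric} before rerunning the proof of Theorem~\ref{thm:CCCDDSY}. Two of your steps usefully sharpen the paper's sketch: the explicit check that the bound on $\delta$ implies the hypotheses of Proposition~\ref{prop:corollaryOfCK}(i)--(iii), which the ``same argument'' step needs but the paper leaves implicit, and the direct nefness argument in place of the paper's appeal to Nakai--Moishezon.
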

\begin{proof}
    Let $\tC$ be the strict transform of $C$ inside the blowup $S \rightarrow \mathbb{P}^2$ of $\P^{2}$ at the singular points of $C$. Then $S$ is a nice surface with $H^1(S,\mathcal{O}_S)=0$, and $\tC$ is smooth since $C$ only has ordinary nodes/cusps. Furthermore, one can check that $\tC^{2} > 0$ and $\tC$ must intersect non-negatively against any other curves on $S$. By the Nakai-Moishezon criterion it follows that $\tC$ is nef, and $\tC^{2} > 0$ implies that $\tC$ is also big. If $d-1 < \frac{\tC^2}{9}$, then Theorem~\ref{thm: main_geometric} implies that $W_e(\tC)$ contains no positive-dimensional abelian varieties for $e \leq d-1$. The same argument as in the proof of \cref{thm:CCCDDSY} %(using \ref{prop:corollaryOfCK})%
    then gives us the desired result.
    
    All that remains to check is that the inequality below holds:
    \begin{equation*}
        \frac{\tC^2}{9} = \frac{d^2-4\delta}{9} > d-1 \iff d^2-9d+9-4\delta >0.
    \end{equation*}
    Note that the assumption $d \geq 9$ is used to show that $\frac{d^2}{4}-\frac{9}{4}d+2 \geq 1$.
\end{proof}

\bibliographystyle{plain}
\bibliography{bibliography}

\footnotesize{

\noindent \textsc{Department of Mathematics, CUNY Graduate Center, New York, NY 10016} \\
\noindent Zachary R. Canale, \textit{E-mail address:} \href{mailto:zcanale@gradcenter.cuny.edu}{zcanale@gradcenter.cuny.edu}

\noindent \textsc{Department of Mathematics, Harvard University, Cambridge, MA 02138} \\
\noindent Nathan Chen, \textit{E-mail address:} \href{mailto:nathanchen@math.harvard.edu}{nathanchen@math.harvard.edu}

\noindent \textsc{Department of Mathematics, Barnard College, New York, NY 10027} \\
\noindent Zoe Curewitz, \textit{E-mail address:} \href{mailto:zc2567@barnard.edu}{zc2567@barnard.edu}

\noindent \textsc{Department of Mathematics, Columbia University, New York, NY 10027} \\
\noindent Jacob A. Daum, \textit{E-mail address:} \href{mailto:jad2309@columbia.edu}{jad2309@columbia.edu}

\noindent \textsc{Department of Mathematics, Columbia University, New York, NY 10027} \\
\noindent Karina Dovgodko, \textit{E-mail address:} \noindent \href{mailto:kmd2235@columbia.edu}{kmd2235@columbia.edu}

\noindent \textsc{Department of Mathematics, Columbia University, New York, NY 10027} \\
\noindent Carlos F. Santiago-Calder\'{o}n, \textit{E-mail address:} \href{mailto:cfs2150@columbia.edu}{cfs2150@columbia.edu}

\noindent \textsc{Department of Mathematics, Columbia University, New York, NY 10027} \\
\noindent Shiv R. Yajnik, \textit{E-mail address:} \href{mailto:sry2111@columbia.edu}{sry2111@columbia.edu}

}

\end{document}